\def\text#1{\hbox{#1}}
\def\Z{{\mathbb Z}}
\def\R{{\mathbb R}}
\def\N{{\mathbb N}}
\def\C{{\mathbb C}}
\def\Q{{\mathbb Q}}
\def\O{{\mathcal O}}
\def\N{{\rm N}}
\def\logp{{\rm log}^\dagger}
    \DeclareFontFamily{U}{wncy}{}
    \DeclareFontShape{U}{wncy}{m}{n}{<->wncyr10}{}
    \DeclareSymbolFont{mcy}{U}{wncy}{m}{n}
    \DeclareMathSymbol{\Sh}{\mathord}{mcy}{"58}
\newtheorem{theorem}{Theorem}[section]
\newtheorem{definition}[theorem]{Definition}
\newtheorem{lemma}[theorem]{Lemma}
\newtheorem{corollary}[theorem]{Corollary}
\newtheorem{proposition}[theorem]{Proposition}
\newtheorem{question}[theorem]{Question}
\newtheorem{remark}[theorem]{Remark}
\newtheorem{example}[theorem]{Example}
\date{\today}
\theoremstyle{definition}
\numberwithin{equation}{section}
\DeclareMathOperator*{\esssup}{ess\,sup}
\newcommand\blfootnote[1]{%
  \begingroup
  \renewcommand\thefootnote{}\footnote{#1}%
  \addtocounter{footnote}{-1}%
  \endgroup
}
\def \Leb {{\rm Leb}}
\def\N{{\mathbb N}}
\title{Simultaneous $\mathfrak{p}$-orderings and equidistribution} 
\date{}
 \author{Anna Szumowicz}
  \address{Caltech, The Division of Physics, Mathematics and Astronomy,
1200 E California Blvd, Pasadena CA 91125}
\email{anna.szumowicz@caltech.edu}
\subjclass[2010]{11N25,11K38,13F20,11D57} 
\keywords{$\mathfrak{p}$-orderings, integer valued polynomials, potential theory}
\begin{document}
\maketitle
\blfootnote{\today}
\begin{abstract}
Let $D$ be a Dedekind domain. Roughly speaking, a simultaneous $\mathfrak{p}$-ordering is a sequence of elements from $D$ which is equidistributed modulo every power of every prime ideal in $D$ as well as possible. Bhargava in \cite{Bh1} asked which subsets of the Dedekind domains admit simultaneous $\mathfrak{p}$-orderings. We give an overview on the progress in this problem. We also explain how it relates to the theory of integer valued polynomials and list some open problems.  
\end{abstract}
\section{Introduction}
\subsection{Integer valued polynomials and test sets}
Let $D$ be a domain and let $F$ be its field of fractions. We say that a polynomial $P(X)\in F[X]$ is \textbf{integer valued} if $P(D)\subseteq D$. 
 The sum, the product and the difference of integer valued polynomials is again integer valued so the set of integer valued polynomials forms a ring
 
 \begin{equation*} 
 \textrm{Int}(D)=\{ f(X)\in F[X]|\ \ f(D)\subseteq D\}. 
 \end{equation*}
 More generally, for any subset $E\subseteq D$ we can consider
 \begin{equation*} 
 \textrm{Int}(E,D)=\{ f\in F[X]| \ \ f(E)\subseteq D\}.  
 \end{equation*}  
 Integer valued polynomials do not necessarily have coefficients in $D$. For example, consider the case $D=\Z$, $F=\Q$. Then, any polynomial of the form 
 \begin{equation*} 
 {X \choose n}=\frac{X(X-1)\ldots (X-n+1)}{n!}
 \end{equation*}
 with $n\in \N$ is integer-valued. In fact, any integer valued polynomial $P(X)\in \mathbb Q[X]$ of degree $n$ can be uniquely written as a linear combination 
 
 \begin{equation*} 
 P(X)=\sum _{i=0}^{n} \alpha _i {X \choose i}, 
 \end{equation*}
 where $\alpha _i\in \Z$. 
 
 This example shows that to check whether a degree $n$ polynomial $P(X)\in \Z[X]$ is integer valued it is enough to check its values on the set $\{ 0,1 \ldots ,n\}$. Such testing sets can be defined in greater generality. Volkov and Petrov \cite{PV} introduced the notion of an $n$-universal set. 
 
 \begin{definition} 
 Let $D$ be a domain and let $F$ be its field of fractions. We say that a finite subset $S\subseteq D$ is \textbf{$n$-universal} if the following holds:
\\ For every $P(X)\in F[X]$ of degree at most $n$, if $P(S)\subseteq D$ then $P(D)\subseteq D$, i.e. $P(X)$ is integer valued polynomial. 
\end{definition} 
We have the related notion of a Newton sequence. 
\begin{definition} 
Let $D$ be a domain. A sequence $a_0,a_1,\ldots , a_n$ is called a Newton sequence if for every $0\leq m \leq n$ the set $\{ a_0, \ldots a_m\}$ is $m$-universal. The integer $n$ is called the length of the Newton sequence.   
\end{definition} 
Using the Lagrange interpolation it is easy to give a lower bound on the cardinality of an $n$-universal set.
\begin{lemma} 
Let $D$ be a domain which is not a field. Then, every $n$-universal subset of $D$ has at least $n+1$ elements. 
\end{lemma}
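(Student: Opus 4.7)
My plan is to prove the contrapositive: if $S\subseteq D$ has $|S|=k\le n$, then $S$ is not $n$-universal. Concretely, I would exhibit a single polynomial $P(X)\in F[X]$ of degree at most $n$ that is integer valued on $S$ but fails to be integer valued on $D$.

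Two preliminary observations drive the construction. First, since $D$ is an integral domain that is not a field, $D$ must be infinite, because every finite integral domain is automatically a field. Second, the hypothesis that $D$ is not a field supplies a nonzero non-unit $d\in D$; for such $d$ the element $1/d$ lies in $F\setminus D$, since otherwise $d\cdot(1/d)=1$ would force $d$ to be a unit.

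With these in hand I would write $S=\{a_0,\dots,a_{k-1}\}$, pick some $b\in D\setminus S$ (which exists because $D$ is infinite), and consider
\[
Q(X)=\prod_{i=0}^{k-1}(X-a_i)\in D[X],
\]
a polynomial of degree $k\le n$ with $Q(b)\ne 0$ because $D$ is a domain and $b\ne a_i$ for each $i$. The candidate is then
\[
P(X)=\frac{Q(X)}{d\,Q(b)}\in F[X],
\]
which has degree $k\le n$, vanishes on $S$ so that $P(S)=\{0\}\subseteq D$, and satisfies $P(b)=1/d\notin D$. Thus $P$ witnesses the failure of $n$-universality.

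The argument really is this short; there is no substantial obstacle. The only points needing a line of justification are the two preliminary observations that $D$ is infinite and contains a nonzero non-unit, both of which are immediate from the hypothesis. The construction itself is essentially the Lagrange-type polynomial $\prod_i(X-a_i)$, rescaled by a suitable denominator so that its value at one extra point $b\in D$ is forced to land outside $D$.
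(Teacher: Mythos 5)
Your proof is correct and follows essentially the same route as the paper: both construct a polynomial of degree at most $n$ vanishing on the given set and normalized so that its value at one additional point of $D$ lies in $F\setminus D$ (the paper writes it as a Lagrange-type product scaled by some $\alpha\notin D$, you scale by $1/(d\,Q(b))$ with $d$ a nonzero non-unit). Your version is marginally more careful in that it explicitly justifies the existence of the extra point $b$ (via infinitude of $D$) and of a non-integer value $1/d$ (via a non-unit), both of which the paper leaves implicit.
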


\begin{proof}Indeed, for any fixed pairwise different elements $d_0,d_1,\ldots,d_n\in D$ we can construct a polynomial $P$ that takes value $0$ on $d_0,\ldots,d_{n-1}$ but a non-integer value $\alpha$ at $d_n$:
\[ P(X)=\alpha\prod_{i=0}^{n-1}\frac{X-d_i}{d_n-d_i}.
\]
Therefore $P$ is not integer valued, so $d_0,\ldots,d_{n-1}$ cannot be an $n$-universal set.\end{proof}

On the other hand any Dedekind domain will contain an $n$-universal set with $n+2$ elements (see Theorem \ref{theorem-n+2}). Therefore, the case of $n$-universal sets of cardinality $n+1$ is particularly interesting. Some Dedekind domains will contain such sets and as we shall see later, many do not. 

\begin{definition}
Let $D$ be a Dedekind domain. A subset $S\subseteq D$ with $|S|=n+1$ is called \textbf{$n$-optimal} if it is $n$-universal.
\end{definition} 

\begin{example} 
The set $\{ x,x+1,\ldots ,x+n\}$ is $n$-optimal in $\Z$ for every $x\in \Z$.  
\end{example} 

The property of being $n$-optimal can be also understood as optimal equidistribution modulo all prime powers, hence the name. We expand more on that in the following section, after reviewing what is known on the minimal cardinality of an $n$-universal set.
Petrov and Volkov \cite{PV} showed that there are no $n$-optimal sets in $\Z[i]$, for large enough $n$. Building on their method, together with Byszewski and Fraczyk, we generalized their result to the ring of integers in any quadratic imaginary number field.
\begin{theorem}\cite{BFS2017} 
\label{theorem-mainqi}
Let $K$ be a quadratic imaginary number field and let $\O_K$ be its ring of integers. Then, there is no $n$-optimal sets in $\O_K$ for large enough $n$.
\end{theorem}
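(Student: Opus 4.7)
The plan is to combine Bhargava's characterisation of $n$-optimal sets with the product formula on $K$ to obtain an exact equality between an archimedean Vandermonde and a product of factorial norms, and then to show via potential theory on $\C$ that this equality cannot hold for large $n$.

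\textbf{Step 1 (Reformulation).} By Bhargava's theory of $\mathfrak{p}$-orderings, a subset $S=\{a_0,\dots,a_n\}\subset\O_K$ with $|S|=n+1$ is $n$-optimal if and only if
\[
v_\mathfrak{p}\Bigl(\prod_{i<j}(a_i-a_j)\Bigr) \;=\; \sum_{k=0}^{n}v_\mathfrak{p}(k!_{\O_K})
\]
at every prime $\mathfrak{p}$ of $\O_K$. I would sum these identities weighted by $\log N(\mathfrak{p})$ and invoke the product formula. Since the unique archimedean place of $K$ is complex, fixing an embedding $\O_K\hookrightarrow\C$ converts the identity into
\begin{equation}\label{eqn:star-proposal}
\log\prod_{i<j}|a_i-a_j|^2 \;=\; \log N\Bigl(\prod_{k=0}^{n}k!_{\O_K}\Bigr).
\end{equation}

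\textbf{Step 2 (Asymptotic of the right-hand side).} Starting from Bhargava's local formula $v_\mathfrak{p}(m!_{\O_K}) = \sum_{j\geq 1}\lfloor m/N(\mathfrak{p})^j\rfloor$ and the Landau prime ideal theorem for $K$, I expect an expansion of the shape
\[
\log N\Bigl(\prod_{k=0}^{n}k!_{\O_K}\Bigr) \;=\; \tfrac{1}{2}n^2\log n + \gamma_K\, n^2 + o(n^2),
\]
with $\gamma_K$ an explicit constant determined by the splitting of rational primes in $K$ and by the residue of $\zeta_K$ at $s=1$.

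\textbf{Step 3 (Lower bound on the left-hand side via capacity).} View $\O_K$ as a lattice in $\C$ of covolume $V_K=\sqrt{|d_K|}/2$. Two ingredients feed in: (i) any $(n+1)$-element subset of such a lattice has diameter $\gtrsim \sqrt{V_K\, n/\pi}$; (ii) the local versions of Step 1 applied at small-norm primes force the empirical measure of (rescaled) $S$ to approximate the uniform measure on a disc, rather than an equilibrium measure concentrated on its boundary. Combined with a discrete Fekete--Bilu-type lower bound on the logarithmic energy of lattice configurations, this would yield
\[
\log\prod_{i<j}|a_i-a_j|^2 \;\geq\; \tfrac{1}{2}n^2\log n + \gamma'_K\, n^2 + o(n^2),
\]
with $\gamma'_K > \gamma_K$ strictly. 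Substituting into \eqref{eqn:star-proposal} yields a contradiction for all sufficiently large $n$.

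\textbf{The main obstacle.} The heart of the argument is the strict inequality $\gamma'_K>\gamma_K$. The Fekete-type lower bound on the archimedean logarithmic energy is saturated by configurations equidistributed on the \emph{boundary} of the enclosing disc, whereas an $n$-optimal set is forced by its $\mathfrak{p}$-adic balance to distribute roughly uniformly across the whole disc; quantifying this mismatch as an energy gap of order $n^2$---uniformly in $K$, with correct dependence on $d_K$ and on the local behaviour of the small primes---is the $\O_K$-analogue of the estimate Petrov and Volkov \cite{PV} carried out in the Gaussian case, and is where the technical work lives.
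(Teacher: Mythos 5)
There is a genuine gap, and it sits exactly where you locate the ``technical work'': the strict inequality $\gamma'_K>\gamma_K$ in Step 3 is false, so the energy comparison you build the whole argument on cannot produce a contradiction. Your Steps 1--2 are sound and agree with the paper (they amount to Proposition \ref{proposition-energy} together with Lamoureux's estimate, giving Corollary \ref{corollary-energyforsets}: $\log N_{K/\Q}(E(S))=n^2\log n-n^2(\tfrac32+\gamma_K-\gamma_\Q)+o(n^2)$ for an $n$-optimal $S$, with the reverse inequality for arbitrary $(n+1)$-subsets). But the archimedean side saturates this to order $n^2$: the infimum of the logarithmic energy over compactly supported probability measures of density at most $1$ is \emph{attained} (by the uniform measure on a ball of the right volume), and its value is exactly the constant coming from the factorials. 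You can check this already for $K=\Q$, where $\int_0^1\int_0^1\log|x-y|\,dx\,dy=-\tfrac32$ matches $-(\tfrac32+\gamma_K-\gamma_\Q)$ on the nose; the same exact matching in general is the content of Proposition \ref{proposition-energylimitmeasure} versus Lemma \ref{lemma-energylowerbound} in \cite{FS2018}. So a configuration that fills a disc with density $1$ is perfectly consistent with your identity \eqref{eqn:star-proposal} at order $n^2$, and no contradiction appears at that scale. (There is also a sign-of-optimization confusion: for the kernel $+\log|x-y|$, boundary-concentrated Fekete/equilibrium configurations \emph{maximize} the energy under confinement; the relevant \emph{lower} bound for lattice configurations comes from the density-at-most-one constraint and is exactly the one that is achieved, not exceeded, by uniform filling.)

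What the energy minimization actually buys is structural information --- it forces $S$ to fill a specific region up to $o(n)$ points --- and the contradiction must then be extracted from a \emph{lower-order, arithmetic} effect that your proposal never touches. In the paper's argument for quadratic imaginary $K$, Proposition \ref{proposition-energy} is used via the discrete collapsing procedure (which needs convexity of $|N_{K/\Q}|$, available exactly in the imaginary quadratic case) to show $S$ is collapsed in every direction; combined with almost uniform equidistribution modulo a large inert prime (Lemma \ref{lemma-equidistribution}) this traps $S$ in a rectangle or hexagon containing only $n+o(n)$ lattice points, which $S$ must therefore fill almost perfectly. The final step uses Dirichlet's theorem to produce a non-split rational prime $p_1$ of size about $\sqrt{n}$ modulo which such a rectangle or hexagon necessarily contains too many congruent points, violating almost uniform equidistribution. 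This failure-of-equidistribution-modulo-a-specific-prime step (a discrepancy argument, of boundary order rather than order $n^2$) is the engine of the proof, and it has no counterpart in your proposal. To repair your approach you would need to replace the claimed energy gap by (i) a proof that $S$ fills a definite region up to $o(n)$ points, and (ii) a counting argument exhibiting a prime power modulo which no near-perfect filling of that region can be almost uniformly equidistributed.
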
 
For general quadratic number fields Chabert and Cahen \cite{CC} proved that there are no $2$-optimal sets except possibly in $\Q(\sqrt{d})$ with $d=-3,-1,2,3,5$ and $d\equiv 1 \mod 8$. 
The proof of Theorem \ref{theorem-mainqi} as well as the original method of Petrov and Volkov heavily relies on the fact that the norm of the field extension $K/\Q$ is convex. This is not the case in any number field beyond the imaginary quadratic extensions of $\Q$ and $\Q$ itself. 
Together with Fraczyk we used a new potential theoretic approach to extended the result to all number fields $K\neq \Q$.  
\begin{theorem}\cite{FS2018}
\label{theorem-main}
Let $K\neq \Q$ be a number field and let $\O_K$ be its ring of integers. There exists $n_0\in \N$ dependent on $K$ such that for any $n\geq n_0$ there is no $n$-optimal sets in $\O_K$. 
\end{theorem}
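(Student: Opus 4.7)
The plan is to adapt the potential-theoretic method introduced in \cite{FS2018}: convert $n$-optimality into an exact global arithmetic identity via Bhargava's factorials, dualise it through the product formula into a Vandermonde identity at the archimedean places, and obstruct it by a capacity estimate on the Minkowski embedding of $\O_K$.

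First I would reformulate $n$-optimality arithmetically. By Bhargava's theory of $\mathfrak{p}$-orderings, a set $S=\{a_0,\dots,a_n\}\subseteq \O_K$ with $|S|=n+1$ is $n$-universal if and only if for every non-zero prime $\mathfrak{p}\subset \O_K$
\[
v_{\mathfrak{p}}\!\Bigl(\prod_{0\le i<j\le n}(a_j-a_i)\Bigr)\;=\;\sum_{k=0}^{n}v_{\mathfrak{p}}(k!_{\mathfrak{p}}),
\]
where $k!_{\mathfrak{p}}$ is the local Bhargava factorial. Multiplying by $\log \mathrm{N}\mathfrak{p}$, summing over all primes, and applying the product formula to $\prod_{i<j}(a_j-a_i)\in K^\times$, this is equivalent to the archimedean identity
\[
\sum_{v\mid\infty} d_v \sum_{0\le i<j\le n}\log|a_j-a_i|_v\;=\;\log\prod_{k=0}^{n} \mathrm{N}(k!_{\O_K}),
\]
whose right-hand side is explicit and, via known asymptotics for factorial ideals, grows as $c_K\,n^2\log n+O(n^2)$ for an effective constant $c_K$ depending only on $K$.

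Second, I would bring in potential theory. Let $\iota\colon\O_K\hookrightarrow K_{\R}:=K\otimes_{\Q}\R\cong \R^{r_1}\times \C^{r_2}$ be the Minkowski embedding, and let $R_n$ be the smallest radius of a ball in $K_{\R}$ (for a fixed norm) containing $\iota(S_n)$. Since $\iota(\O_K)$ is a lattice of rank $[K:\Q]$ in $K_{\R}$, a volume comparison forces $R_n\gg n^{1/[K:\Q]}$. Rescaling by $R_n$ yields a tight family of empirical measures $\mu_n=\frac{1}{n+1}\sum_{i=0}^{n}\delta_{\iota(a_i)/R_n}$, from which I would extract a weak limit $\mu$ along a subsequence. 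Dividing the displayed identity by $\binom{n+1}{2}$ and passing to $n\to\infty$ turns it into a logarithmic energy equation
\[
\sum_{v\mid\infty} d_v \iint \log|x-y|_v\,d\mu(x)\,d\mu(y)\;=\;\kappa_K,
\]
where $\kappa_K$ is the limiting value prescribed by $\log\prod_k \mathrm{N}(k!_{\O_K})$ after the rescaling.

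The final and most delicate step is to prove that no probability measure on $K_{\R}$ can attain $\kappa_K$ once $K\neq\Q$. This is a Fekete-type capacity comparison: combine one-variable Robin-constant upper bounds at each infinite place and show that their weighted sum falls strictly short of $\kappa_K$ as soon as the real dimension of $K_{\R}$ is at least two, equality being attained only in the one-dimensional case $K=\Q$ by the equilibrium measure of an interval (corresponding to the configuration $\{x,x+1,\ldots,x+n\}$). I expect this strict inequality to be the main obstacle, for two reasons: mixed real and complex signatures scale their logarithmic energies differently, and one must rule out an ``escape to infinity'' scenario where $\iota(S_n)$ is not uniformly bounded in some archimedean coordinate. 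Both are handled by upgrading the lattice rigidity of $\iota(\O_K)$ to a two-sided diameter bound $R_n\asymp n^{1/[K:\Q]}$ and combining it with a quantitative Fekete theorem on products of one-dimensional compacta. Once the strict inequality is in place for all $n\ge n_0(K)$, the arithmetic identity above cannot hold, ruling out $n$-optimal sets and completing the proof.
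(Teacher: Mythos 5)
Your first two steps (the reformulation of $n$-optimality as an exact identity for the norm of the Vandermonde product via Bhargava factorials and the product formula, the asymptotic $n^2\log n + O(n^2)$ for $\log\prod_k N(k!_{\O_K})$, and the passage to empirical measures and a limiting logarithmic energy) do match the strategy of \cite{FS2018}. But there are two genuine gaps. First, you treat the tightness of the rescaled sets as a consequence of ``lattice rigidity,'' claiming a two-sided bound $R_n\asymp n^{1/[K:\Q]}$ for the radius of a ball containing $\iota(S_n)$. The lower bound is indeed a volume count, but the upper bound is the central difficulty of the whole proof: because $\O_K^\times$ is infinite for $K\neq\Q$, the region $\{\|x\|\leq T\}$ is non-compact and an energy-minimizing configuration could a priori stretch arbitrarily far along the unit orbits. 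In \cite{FS2018} the enclosure statement (Theorem 3.1 there, quoted as Theorem \ref{theorem-shape} above) is proved via a counting estimate for $\{x\in\O_K:\|x(a-x)\|\leq X^2\}$ that requires the Aramaki--Ikehara Tauberian theorem and the Baker--W\"ustholz bounds on linear forms in logarithms; moreover the correct conclusion is containment in a \emph{cylinder} of volume $O(n)$ with possibly very unequal radii at the different places, not in a ball. Nothing in your sketch supplies this.

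Second, your terminal obstruction is not the right one and, as stated, is false. You propose to show that no probability measure on $K_\R$ attains the limiting energy $\kappa_K$ once $K\neq\Q$, by a Fekete/Robin-constant comparison. But the relevant variational problem is not the unconstrained capacity problem: the limit measures have density at most $1$ with respect to Lebesgue measure (this constraint is what survives from $S_n\subseteq\O_K$ being a subset of a lattice), and within that class the minimal energy \emph{is} attained --- by Lemma \ref{lemma-energylowerbound} and Proposition \ref{proposition-5.5} the minimizer is $\Leb|_U$ for a star-shaped open set $U$ with piecewise $C^1$ boundary, and Proposition \ref{proposition-energylimitmeasure} shows the limit measure achieves exactly $\kappa_K$. (Your parenthetical that equality for $K=\Q$ is attained by the equilibrium measure of an interval is symptomatic of the confusion: the rescaled limit of $\{x,\dots,x+n\}$ is the \emph{uniform} measure on an interval, not the arcsine measure, precisely because of the density constraint.) So a capacity comparison cannot produce a contradiction. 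The actual contradiction in \cite{FS2018} is arithmetic: almost uniform equidistribution modulo every prime power forces the lattice-point discrepancy $D_x(U)$ to be less than $1$ for all $x\in V^\times$ (Lemma \ref{lemma-upperboundondiscrepancy}), while the $C^1$ regularity of $\partial U$ forces $D_x(U)>1$ for some $x$ when $\dim_\R V>1$ (Lemma \ref{lemma-lowerboundondiscrepancy}). This discrepancy step, which is where $K\neq\Q$ finally enters, is entirely absent from your plan.
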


The picture becomes complete when we combine the above theorem with a very general upper bound on the minimal cardinality of an $n$-universal set.
\begin{theorem}\cite{BFS2017}
\label{theorem-n+2}
Let $D$ be a Dedekind domain. Then for any $n\in \N$ there exists an $n$-universal set in $D$ of size $n+2$. 
\end{theorem}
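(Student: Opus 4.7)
The plan is to exploit the local-global principle for integer-valued polynomials together with the Chinese Remainder Theorem. The starting point is the identity $\textrm{Int}(D)=\bigcap_\mathfrak{p}\textrm{Int}(D_\mathfrak{p})$, where $\mathfrak{p}$ ranges over the maximal ideals of $D$; consequently, $S\subseteq D$ is $n$-universal if and only if its image in each DVR $D_\mathfrak{p}$ is locally $n$-universal. Bhargava's inductive construction of a $\mathfrak{p}$-ordering produces a sequence $\alpha_0^{\mathfrak{p}},\alpha_1^\mathfrak{p},\ldots$ with $\alpha_k^\mathfrak{p}$ chosen to minimize $v_\mathfrak{p}\bigl(\prod_{j<k}(\alpha_k^\mathfrak{p}-\alpha_j^\mathfrak{p})\bigr)$; the first $n+1$ terms always form a local $n$-optimal set in $D_\mathfrak{p}$. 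So the problem is not local existence but global coherence across all primes.

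The next step is to reduce the gluing problem to finitely many primes. Whether a set of $n+1$ elements of $D$ is locally $n$-universal at a prime $\mathfrak{p}$ depends only on their residues modulo some power $\mathfrak{p}^{N(\mathfrak{p},n)}$, and at primes with $|D/\mathfrak{p}|>n$ any $n+1$ elements with pairwise distinct residues modulo $\mathfrak{p}$ already suffice. Thus, outside a finite set of \emph{bad} primes $\mathfrak{p}_1,\ldots,\mathfrak{p}_r$ (including those with small residue field), the condition reduces to a mild genericity constraint. I would then use the Chinese Remainder Theorem to select $n+1$ elements $a_0,\ldots,a_n\in D$ whose residues modulo each $\mathfrak{p}_i^{N_i}$ realize prescribed local $\mathfrak{p}_i$-orderings, and which are in sufficiently general position everywhere else.

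The $(n+2)$nd element serves to repair the obstruction to global $n$-universality. Although no single global sequence of length $n+1$ need align with all the local orderings simultaneously — that failure is precisely the content of Theorems~\ref{theorem-mainqi} and \ref{theorem-main} — adjoining one more element $a_{n+1}\in D$, chosen again by strong approximation, allows the enlarged set to contain a locally $n$-universal subset at every prime. The key mechanism is that at each prime $\mathfrak{p}$ the local obstruction is effectively one-dimensional, namely a single deficiency in $\mathfrak{p}$-adic valuation of the relevant product of differences, which one additional well-chosen interpolation point can absorb.

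The main obstacle is precisely this last step: verifying that one extra element is always enough to repair \emph{all} remaining local defects without introducing new ones at primes that were already fine. This is where the Dedekind structure is used crucially, via invertibility of ideals and the ability to prescribe arbitrary local behavior at finitely many primes. I would not expect to establish anything sharper than $n+2$ from this argument — which is consistent with the non-existence results for $n$-optimal sets cited above.
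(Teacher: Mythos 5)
Your overall skeleton (localize, handle the finitely many primes of norm $\le n$ by prescribing $\mathfrak{p}$-orderings via CRT, then use the structure of Dedekind domains to add one more element) is the right one, and it matches the spirit of the construction in \cite{BFS2017}. But there is a genuine gap in the repair step, and it sits exactly where you locate the ``main obstacle.'' Your claim that at each remaining prime the obstruction is ``effectively one-dimensional'' and can be absorbed by a single extra point is not justified by your construction, and in fact it is false if the first $n+1$ elements are chosen in one shot. At a prime $\mathfrak{q}$ with $|D/\mathfrak{q}|>n$, local $n$-universality of a set $S$ is equivalent to $S$ having at least $n+1$ distinct residues modulo $\mathfrak{q}$. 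If you select $a_0,\ldots,a_n$ by a single application of CRT controlling only the small primes, nothing prevents a large prime $\mathfrak{q}$ from dividing two \emph{disjoint} differences, say $a_1-a_0$ and $a_3-a_2$; then $\{a_0,\ldots,a_n\}$ has at most $n-1$ distinct residues modulo $\mathfrak{q}$, and adjoining any single $a_{n+1}$ raises this to at most $n$, which is still not enough. The phrase ``in sufficiently general position everywhere else'' cannot be realized by CRT as stated, because it is a condition at infinitely many primes, and demanding pairwise distinct residues at \emph{all} large primes is essentially the (generally impossible) $n$-optimality problem.

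The missing idea is to make the choice of $a_0,\ldots,a_{n+1}$ \emph{iterative}, which is exactly what the paper's one-line description of the proof refers to. Having chosen $a_0,\ldots,a_{k-1}$, only finitely many primes divide some difference $a_j-a_i$ with $i<j<k$; choose $a_k$ by CRT so that (i) it continues the prescribed $\mathfrak{p}$-ordering modulo a sufficiently high power of each prime of norm $\le n$, and (ii) its residue modulo each large prime in that finite list differs from the residues of all of $a_0,\ldots,a_{k-1}$ (possible since such primes have more than $n+1$ residue classes). A short argument then shows that every prime of norm $>n$ divides \emph{at most one} difference $a_j-a_i$ over the whole set: if a new prime $\mathfrak{q}$ divided both $a_k-a_i$ and $a_k-a_j$ it would divide $a_j-a_i$ and hence would already have been excluded at step $k$. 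Only now is the defect at each large prime genuinely ``one collision,'' so the set $\{a_0,\ldots,a_{n+1}\}$ has at least $n+1$ distinct residues modulo every prime of norm $>n$, while the subset $\{a_0,\ldots,a_n\}$ forces $v_S(\mathfrak{p},k)=w_{\mathfrak{p}}(k)$ at the small primes. With this modification your argument closes; without it, the final step does not go through.
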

The proof is an iterative construction using the Chinese Remainder Theorem. From Theorem \ref{theorem-main} we can now deduce
\begin{corollary}\cite{FS2018} 
Let $K\neq \Q$ be a number field. For $n$ large enough, the minimal cardinality of an $n$-universal set in $\O_K$ is $n+2$. 
\end{corollary}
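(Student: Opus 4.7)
The plan is to combine the three preceding results to sandwich the minimal cardinality between $n+2$ and $n+2$. First I would invoke the lower bound lemma (the one proved via Lagrange interpolation) to conclude that any $n$-universal set in $\O_K$ has cardinality at least $n+1$. Since $\O_K$ is a Dedekind domain which is not a field (because $K$ is a number field), the hypothesis of that lemma is satisfied.

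Next I would apply Theorem \ref{theorem-main} to rule out the case of cardinality exactly $n+1$. Recall that an $n$-universal set of cardinality $n+1$ is by definition $n$-optimal. Theorem \ref{theorem-main} provides a threshold $n_0 = n_0(K)$ such that for every $n \geq n_0$ no $n$-optimal set exists in $\O_K$. Combining this with the previous step shows that for $n \geq n_0$ every $n$-universal subset of $\O_K$ has cardinality at least $n+2$.

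Finally, I would apply Theorem \ref{theorem-n+2} to $D = \O_K$ to produce an $n$-universal set of cardinality exactly $n+2$ for each $n$, which gives the matching upper bound. Taking $n \geq n_0$ the minimal cardinality is therefore exactly $n+2$, which is the desired conclusion.

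There is essentially no obstacle here: the entire content has been absorbed into Theorems \ref{theorem-main} and \ref{theorem-n+2}, and the corollary is a one-line combination of a lower bound, an exclusion, and an explicit construction. The only point worth stating carefully is that the threshold $n_0$ in the corollary is inherited verbatim from Theorem \ref{theorem-main}, and that it depends on $K$ but not on anything further.
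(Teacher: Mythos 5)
Your proposal is correct and follows exactly the argument the paper intends: the Lagrange-interpolation lemma gives the lower bound $n+1$, Theorem \ref{theorem-main} excludes cardinality $n+1$ (i.e.\ $n$-optimal sets) for $n\geq n_0(K)$, and Theorem \ref{theorem-n+2} supplies the matching $n+2$ construction. No issues.
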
 

\subsection{Equidistribution and simultaneous $\mathfrak{p}$-orderings}
The methods of proving the non-existence of large $n$-optimal sets are based on the almost equidistribution property of $n$-optimal sets.
\begin{definition} 
Let $A$ be a ring and let $I$ be an ideal in $A$. A finite subset $E\subseteq A$ is called \textbf{almost uniformly equidistributed} modulo $I$ if for any $a,b\in A$ we have
\begin{equation*} 
|\{x\in E| \ \ x-a\in I\}|-|\{x\in E|\ \ x-b\in I\}|\in \{-1,0,1\}. 
\end{equation*}
\end{definition}
\begin{lemma}\cite{PV}\cite{BFS2017}
\label{lemma-equidistribution}
Let $K$ be a number field and let $\O_K$ be its ring of integers. Let $S\subseteq \O_K$ be a finite subset with $|S|=n+1$. Then $S$ is $n$-optimal if and only if $S$ is almost uniformly equidistributed modulo $\mathfrak{p^l}$ for every prime ideal $\mathfrak{p}$ in $\O_K$ and all $l\in \N$.  
\end{lemma}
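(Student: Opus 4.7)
The plan is to localize at each prime ideal and reduce the statement to a comparison of Bhargava-style factorials. Integer-valuedness of a polynomial $P\in F[X]$ on $\O_K$ (respectively on $S$) is a local condition, holding if and only if it holds for each localization $(\O_K)_\mathfrak{p}$, so $S$ is $n$-universal if and only if the corresponding local version holds at every prime $\mathfrak{p}$. The almost-equidistribution hypothesis is already $\mathfrak{p}$-local, so I fix $\mathfrak{p}$, write $D=(\O_K)_\mathfrak{p}$ for the DVR localization, $\pi$ for a uniformizer, $q=|\O_K/\mathfrak{p}|$ for the residue field size, and prove the equivalence in this local setting.

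Next, I order $S=\{s_0,\ldots,s_n\}$ as a $\mathfrak{p}$-ordering, choosing $s_i$ inductively from $S\setminus\{s_0,\ldots,s_{i-1}\}$ so as to minimize $v_i(S):=v_\pi\bigl(\prod_{j<i}(s_i-s_j)\bigr)$; by Bhargava's invariance theorem, the resulting sequence $(v_i(S))_{i\le n}$ does not depend on the choice. In the Newton basis $f_i(X)=\prod_{j<i}(X-s_j)$ and writing $P=\sum_{i\le n}c_if_i$, the evaluation system $P(s_k)=\sum_{i\le k}c_if_i(s_k)$ is lower triangular, and the defining property $v_\pi(f_i(s_k))\ge v_i(S)$ for $k\ge i$ lets a forward-substitution argument show that $P$ is integer-valued on $S$ if and only if $v_\pi(c_i)\ge -v_i(S)$ for every $i\le n$. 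The analogous computation for $D$---or equivalently Bhargava's basis theorem for $\mathrm{Int}(D)_\mathfrak{p}$---yields $v_i(D)=\sum_{k\ge 1}\lfloor i/q^k\rfloor$. Since $S\subseteq D$ forces $v_i(S)\ge v_i(D)$, the local $n$-universality at $\mathfrak{p}$ is equivalent to the equality $v_i(S)=v_i(D)$ for every $i\le n$.

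It remains to match this factorial equality with almost uniform equidistribution modulo every $\mathfrak{p}^l$. Rewriting $v_\pi(s-s')=\#\{k\ge 1:s\equiv s'\pmod{\pi^k}\}$ and interchanging sums decomposes $v_i(S)=\sum_{k\ge 1}N_i^{(k)}(S)$, where $N_i^{(k)}(S)$ counts the previously chosen $s_j$ sharing a residue class modulo $\pi^k$ with $s_i$. For $D$, one can attain the scale-$k$ optimum $N_i^{(k)}=\lfloor i/q^k\rfloor$ simultaneously at every scale and every step, giving $v_i(D)=\sum_k\lfloor i/q^k\rfloor$. A $\mathfrak{p}$-ordering of $S$ realizes the same optimum at every $i\le n$ precisely when at every scale the residue classes of $S$ can be drained in a balanced order, which by a greedy argument is possible if and only if $S$ is almost uniformly equidistributed modulo $\pi^k$ for every $k\ge 1$. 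The main obstacle is this final combinatorial step: verifying that simultaneous balancing across all scales is compatible whenever $S$ is almost equidistributed at each scale individually, and conversely that any imbalance of size at least two among classes modulo $\mathfrak{p}^{k_0}$ forces $v_i(S)>v_i(D)$ at some intermediate $i\le n$.
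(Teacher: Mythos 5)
Your reduction to the local statement and the Newton--basis computation are sound and follow Bhargava's machinery: localizing at $\mathfrak{p}$, showing that $P=\sum_i c_if_i$ is integer-valued on $S$ precisely when $v_\pi(c_i)\ge -v_i(S)$ for all $i$, and concluding that local $n$-universality at $\mathfrak{p}$ is equivalent to $v_i(S)=v_i(D)$ for all $i\le n$ are all correct (the last equivalence needs the small additional observation that one can match leading coefficients against a Newton basis built from a $\mathfrak{p}$-ordering of all of $D$ and induct downward on the degree, but that is routine). The genuine gap is the final step, which you yourself flag. As you have framed it --- realizing $N_i^{(k)}(S)=\lfloor i/q^k\rfloor$ for every individual scale $k$ and every index $i$ along a single ordering --- the claim is both unproven and stronger than what is needed: for a general ordering there is no termwise (in $k$) inequality $N_i^{(k)}\ge\lfloor i/q^k\rfloor$, and producing an ordering of $S$ that is simultaneously round-robin at all scales is a nontrivial combinatorial construction which the proof does not actually require.

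The standard way to close the gap --- and the route taken in \cite{PV,BFS2017}, which is why the paper records Proposition \ref{proposition-energy} --- is to sum over $i$ \emph{before} decomposing over scales. For any ordering of $S$ one has the ordering-independent identity
\begin{equation*}
2\sum_{i=0}^{n}v_i(S)\;=\;v_{\mathfrak{p}}\Bigl(\prod_{i\neq j}(s_i-s_j)\Bigr)\;=\;\sum_{k\ge 1}\;\sum_{a\in \O_K/\mathfrak{p}^k}m_a^{(k)}\bigl(m_a^{(k)}-1\bigr),
\end{equation*}
where $m_a^{(k)}=|\{s\in S:\ s\equiv a \bmod \mathfrak{p}^k\}|$. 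For each fixed $k$, convexity of $m\mapsto m(m-1)$ shows that $\sum_a m_a^{(k)}(m_a^{(k)}-1)$, subject to $\sum_a m_a^{(k)}=n+1$, is minimized exactly when the $m_a^{(k)}$ pairwise differ by at most one, i.e.\ when $S$ is almost uniformly equidistributed modulo $\mathfrak{p}^k$, and the minimal value equals $2\sum_{i=0}^{n}\lfloor i/q^k\rfloor$. Hence $\sum_i v_i(S)\ge \sum_i v_i(D)$ with equality if and only if $S$ is almost uniformly equidistributed modulo every power of $\mathfrak{p}$; combined with the termwise inequality $v_i(S)\ge v_i(D)$ that you already have, equality of the sums is equivalent to $v_i(S)=v_i(D)$ for every $i\le n$. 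Substituting this for your last paragraph makes the argument complete, and no simultaneous balancing across scales ever needs to be exhibited.
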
 

This characterization of  $n$-optimal sets is reminiscent of the notion of a simultaneous $\mathfrak{p}$-ordering, introduced by Bhargava \cite{Bh1}.
\begin{definition} 
Let $E\subseteq \O_K$ and let $\mathfrak{p}$ be a non-zero proper prime ideal in $\O_K$. A sequence $(a_i)_{i\in\N}\subseteq E$ is called a \textbf{$\mathfrak{p}$-ordering} in $E$ if for every $n\in \N$ we have 
\begin{equation*} 
v_E(\mathfrak{p},n):=v_{\mathfrak{p}}\left(\prod_{i=0}^{n-1}(a_i-a_n)\right)=\min_{x\in E}v_{\mathfrak{p}}\left( \prod_{i=0}^{n-1}a_i-x\right),  
\end{equation*} 
where $v_{\mathfrak{p}}$ denotes the additive $\mathfrak{p}$-adic valuation on $K$. 
\end{definition}
The value $v_E(\mathfrak{p},n)$ does not depend on the choice of a $\mathfrak{p}$-ordering. Bhargava defined the generalized factorial as the ideal $n!_E:=\prod_\mathfrak{p} \mathfrak{p}^{v_E(\mathfrak{p},n)}$ where $\mathfrak{p}$ runs over all prime ideals in $\O_K$. 
A sequence of elements in $E$ is called a \textbf{simultaneous $\mathfrak{p}$-ordering} if it is a $\mathfrak{p}$-ordering for every prime ideal $\mathfrak{p}$ in $\O_K$ at the same time. 
One can show that $(a_i)_{i\in \N}\subseteq \O_K$ is a simultaneous $\mathfrak{p}$-ordering in $\O_K$ if and only if the set $\{a_0, \ldots ,a_n\}$ is $n$-optimal (see Lemma \ref{lemma-equidistribution}). In \cite{Bh1,Bh2} Bhargava asked which subsets of Dedekind domains admit simultaneous $\mathfrak{p}$-orderings. In particular, he asked for which number fields $K$, the ring of integers $\O_K$ admits a simultaneous $\mathfrak{p}$-ordering. A partial progress was made by Wood in \cite{Wood} where she showed that there are no simultaneous $\mathfrak{p}$-orderings in $\O_K$ when $K$ is a quadratic imaginary number field. Adam and Cahen \cite{AC} extended this result to any quadratic number field $\mathbb Q(\sqrt{d}), d\in\mathbb Z$ square-free, except for possibly finitely many exceptional $d$'s.
Using a simultaneous $\mathfrak{p}$-ordering, one could construct $n$-optimal sets for every $n\in\N$. Therefore, Theorem \ref{theorem-main} yields:
\begin{corollary}\cite{FS2018} 
\label{corollary-nonexistencepord}
$\Q$ is the only number field whose ring of integers admits a simultaneous $\mathfrak{p}$-ordering. 
\end{corollary}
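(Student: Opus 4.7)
The plan is to derive the corollary directly from Theorem \ref{theorem-main} combined with the equivalence, already flagged in the text, between simultaneous $\mathfrak{p}$-orderings and sequences whose initial segments are $n$-optimal. The argument is a short proof by contradiction, and the real content sits in the theorems already cited.

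First, I would dispatch the case $K = \Q$: the sequence $0, 1, 2, \ldots$ is clearly a simultaneous $\mathfrak{p}$-ordering in $\Z$, since each initial segment $\{0, 1, \ldots, n\}$ is $n$-optimal (this is the example quoted above Theorem \ref{theorem-mainqi}), which by Lemma \ref{lemma-equidistribution} means it is almost uniformly equidistributed modulo every prime power, and this equidistribution property translates directly into the minimum-valuation condition defining a $\mathfrak{p}$-ordering.

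Next, suppose for contradiction that $K \neq \Q$ is a number field and that $\O_K$ admits a simultaneous $\mathfrak{p}$-ordering $(a_i)_{i \in \N}$. The key step is to show that for every $n \in \N$, the truncation $S_n := \{a_0, a_1, \ldots, a_n\}$ is $n$-optimal. By Lemma \ref{lemma-equidistribution}, this reduces to verifying that $S_n$ is almost uniformly equidistributed modulo $\mathfrak{p}^l$ for every non-zero prime $\mathfrak{p} \subset \O_K$ and every $l \in \N$. Fixing such a $\mathfrak{p}$, the $\mathfrak{p}$-ordering property forces, at each stage $i$, the element $a_i$ to minimize the sum $\sum_{j<i} v_{\mathfrak{p}}(a_j - x)$ over $x \in \O_K$; a standard counting argument (decomposing $\O_K$ into residue classes modulo successive powers of $\mathfrak{p}$ and using the pigeonhole principle on class frequencies) shows that this minimization is equivalent to asking that after each step the residues of $a_0, \ldots, a_i$ modulo $\mathfrak{p}^l$ be distributed as evenly as possible for every $l$. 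This yields the almost uniform equidistribution of $S_n$ modulo every $\mathfrak{p}^l$, hence $n$-optimality.

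Now apply Theorem \ref{theorem-main}: for $K \neq \Q$ there is some $n_0$ beyond which no $n$-optimal set exists in $\O_K$. Taking any $n \geq n_0$, the set $S_n$ supplied by the simultaneous $\mathfrak{p}$-ordering would violate this, which is the desired contradiction.

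The only step that requires genuine unpacking is the equivalence between the minimum-valuation condition in the definition of a $\mathfrak{p}$-ordering and stepwise almost uniform equidistribution modulo $\mathfrak{p}^l$; this is the pigeonhole-style lemma implicit in the text just before the corollary. Everything else is a formal transport of hypotheses through Theorem \ref{theorem-main} and Lemma \ref{lemma-equidistribution}, so I do not expect any serious obstacle.
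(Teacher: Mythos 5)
Your proposal is correct and follows essentially the same route as the paper: the text immediately preceding the corollary notes that a simultaneous $\mathfrak{p}$-ordering yields $n$-optimal sets for every $n$ (via the equidistribution characterization of Lemma \ref{lemma-equidistribution}), which contradicts Theorem \ref{theorem-main} for $K\neq\Q$, while $0,1,2,\ldots$ handles $K=\Q$. Your extra unpacking of the valuation-minimization versus stepwise-equidistribution equivalence is exactly the content the paper delegates to the cited lemma, so there is no substantive difference.
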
 
This result answers the question of Bhargava \cite{Bh1, Bh2}. We remark that the methods used to prove Corollary \ref{corollary-nonexistencepord} differ substantially from the methods used by Adam, Cahen and Wood. 

\subsection{Notation}
By $|S|$ we denote the cardinality of the set $S$.
For any $x\in\R$ denote by $\lfloor x\rfloor$ the largest integer less than or equal to $x$. Write $N_{K/\Q}$ for the norm of the extension $K/\Q$. We use the standard big-O and little-o notation. We write $B_{\R}(x,r)$ (resp. $B_{\C}(x,r)$) for a ball in $\R$ (resp. $\C$) of radius $r$ around a point $x\in \R$ (resp. $x\in \C$). 
Denote by $\Leb$ the Lebesgue measure on $\mathbb R,\mathbb C$ and their products. 
We denote by $\Delta _K$ the discriminant of a field $K$. 

\subsection{Structure of the paper} 
In Section \ref{section-quadratic imaginary} we give a sketch of the proof of non-existence of large $n$-optimal sets in the ring of integers of quadratic imaginary number fields (Theorem \ref{theorem-mainqi}). 
In Section \ref{section-energy}, we estimate the energy of $n$-optimal sets. In Section \ref{section-numberfields} we describe the methods used to prove Theorem \ref{theorem-main} in \cite{FS2018}. 
In Section \ref{section-openproblems} we state some questions and open problems. 

\section{$n$-optimal sets for quadratic imaginary number fields}
\label{section-quadratic imaginary}
In this section we give a sketch of the proof of non-existence of large $n$-optimal sets in the ring of integers of a quadratic imaginary number field.
\begin{theorem}\cite{BFS2017} 
Let $K$ be a quadratic imaginary number field and let $\O_K$ be its ring of integers. There is no $n$-optimal sets for large enough $n$. 
\end{theorem}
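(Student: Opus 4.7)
The plan is to follow the strategy of Petrov and Volkov: assume toward contradiction that $S \subseteq \mathcal{O}_K$ is an $n$-optimal set with $|S| = n+1$ for some large $n$. By Lemma \ref{lemma-equidistribution}, $S$ is almost uniformly equidistributed modulo every prime power $\mathfrak{p}^l$ in $\mathcal{O}_K$. I would embed $\mathcal{O}_K$ as a rank-$2$ lattice in $\mathbb{C}$ with covolume $\sqrt{|\Delta_K|}/2$, so that the norm $N_{K/\mathbb{Q}}(z) = |z|^2$ becomes a convex (in fact quadratic) function on $\mathbb{C}$; this convexity is the key analytic ingredient that the imaginary quadratic case affords.

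First I would pick a rational prime $p$ inert in $K$ (infinitely many exist by Chebotarev), so that $(p)\mathcal{O}_K$ is a prime ideal of norm $p^2$. Choosing $l$ with $p^{2l}$ slightly larger than $n$, almost-uniform equidistribution modulo $(p)^l$ forces the $n+1$ elements of $S$ into distinct residue classes; varying $l$ across dyadic scales gives quantitative control of the geometric spread of $S$ inside $\mathbb{C}$. After recentering at a suitable point $z_0$, this yields a lower bound of the form $\sum_{s\in S}|s-z_0|^{2} \geq c(K)\, n^{2}$.

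The complementary upper bound is then derived by averaging the convex function $z \mapsto |z-z_0|^{2}$ over translates using the near-uniform distribution mod $(p)^{l}$ (and, for sharper control, simultaneously at several prime powers). By Jensen's inequality applied coset-by-coset, together with the $\pm 1$ slack in the equidistribution hypothesis, one bounds the second moment from above by the average of $|z-z_0|^{2}$ over an explicit union of cosets whose total area is of order $(n+1)\cdot \mathrm{covol}(\mathcal{O}_K)$. A careful accounting of the error terms produces an upper bound of strictly smaller order in $n$ than the lower bound above, giving the desired contradiction for all $n$ beyond some $n_0 = n_0(K)$.

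The main obstacle is quantitative: the $\pm 1$ discrepancy in almost-uniform equidistribution must be controlled simultaneously across many scales $l$, while the balls used to compare lattice counts with continuous area must be chosen finely enough that their boundary error does not swamp the geometric gain extracted from convexity. It is precisely the convexity of $N_{K/\mathbb{Q}}$ on $\mathbb{C}$ that makes this balancing feasible in the imaginary quadratic case; for other number fields the norm $\prod_\sigma |\sigma(z)|$ fails to be convex on $K \otimes_\mathbb{Q} \mathbb{R}$, which is why the argument does not extend directly and motivates the potential-theoretic approach of Section \ref{section-energy}.
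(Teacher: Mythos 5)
There is a genuine gap, and it sits at the heart of your argument: you try to extract the spatial confinement of $S$ from almost uniform equidistribution alone. Equidistribution modulo $(p)^l$ only says that the points of $S$ occupy distinct (or nearly equally occupied) residue classes of the lattice $p^l\O_K$; since each residue class is an unbounded translate of that lattice, this places no bound whatsoever on the diameter or second moment of $S$ --- a set with one point in each class can be scattered arbitrarily far across $\C$. Consequently your claimed upper bound on $\sum_{s\in S}|s-z_0|^2$ does not follow: the ``union of cosets whose total area is of order $(n+1)\cdot\mathrm{covol}(\O_K)$'' implicitly replaces each coset by a single fundamental domain, which is exactly the step that needs justification. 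The paper supplies this justification through an ingredient you never invoke: the energy-minimization characterization of $n$-optimal sets (Proposition \ref{proposition-energy}, $E(S)=\prod_{i\neq j}(x_i-x_j)$ divides the energy of every other $(n+1)$-element set) and the discrete collapsing procedure derived from it (Lemma \ref{lemma-collapsing}). Collapsing forces the points of $S$ on each line perpendicular to a given direction to form a consecutive run of lattice points centered on a common axis; only \emph{then} does equidistribution modulo an inert prime $p_1$ with $p_1^2>n+1$ cap each run at $p_1$ points and confine $S$ to a rectangle (or hexagon, when $d\equiv -1 \bmod 4$) containing $n+o(n)$ lattice points. This is also where the convexity of the norm actually enters --- in proving that collapsing decreases the energy --- not in a Jensen-type averaging of $|z-z_0|^2$.

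A secondary problem is the shape of your intended contradiction. For any set of $n+1$ lattice points one has $\sum_{s\in S}|s-z_0|^2\gg n^2$ trivially, and for a set filling a region of area $\sim n$ the same sum is $\ll n^2$; both sides are of order $n^2$, so there is no ``strictly smaller order in $n$'' to exploit, and the second moment is too coarse an invariant to distinguish the candidate shapes. The paper's endgame is different and genuinely needed: once $S$ is known to fill the rectangle or hexagon up to $o(n)$ points, one uses Dirichlet's theorem to produce a non-split rational prime $p_1$ of a carefully chosen size for which the \emph{geometry} of that region forces too many points of $S$ into a single residue class modulo $p_1$, contradicting Lemma \ref{lemma-equidistribution}. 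To repair your proposal you would need to (i) bring in Proposition \ref{proposition-energy} and the collapsing argument to obtain the confinement, and (ii) replace the second-moment comparison by a counting argument modulo a well-chosen prime.
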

The condition for a subset to be $n$-optimal can be expressed in terms of the energy ideal of a set.
\begin{definition} 
Let $S=\{x_0, \ldots ,x_n\}$ be a finite subset of $\O_K$. The principal ideal
$E(S):=\prod_{i\neq j}(x_i-x_j)$
is called the energy of the set $S$.
\end{definition} 
The energy ideal was called the volume in \cite{PV, BFS2017} but in the subsequent work \cite{FS2018} it became clear that it is the arithmetic analogue of the energy functional in potential theory, hence the new name.
The energy ideal of an $n$-optimal set is minimal possible in the sense that it should divide the energy of any other set of equal size. This can be made more precise using the factorial ideals:
\begin{definition}\cite{Bh1}\cite{L} 
Let $K$ be a number field and let $\O_K$ be its ring of integers. The $K$-factorial of $n$ is defined as the principal ideal
\begin{equation*} 
n!_K=n!_{\O_K}=\prod _{\mathfrak{p}\in \textrm{Spec}(\O_K)}\mathfrak{p}^{w_{\mathfrak{p}}(n)},
\end{equation*} 
where $w_{\mathfrak{p}}(n)=\sum_{i=1}^{\infty}\lfloor\frac{n}{N(\mathfrak{p}^i)}\rfloor$. 
\end{definition} 

\begin{proposition}\cite{BFS2017,PV} 
\label{proposition-energy}
Let $S\subseteq \O_K$ with $|S|=n+1$. Then, the following conditions are equivalent
\begin{enumerate} 
\item $S$ is $n$-optimal,
\item $E(S)=(\prod _{i=1}^ni!_K)^2$,
\item $E(S)$ divides $E(T)$ for any subset $T$ of $\O_K$ with $n+1$ elements. 
\end{enumerate} 

\end{proposition}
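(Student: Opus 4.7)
The plan is to establish a single local statement at each prime $\mathfrak{p}\subseteq\O_K$: among all $(n+1)$-element subsets $T\subseteq\O_K$, the quantity $v_{\mathfrak{p}}(E(T))$ is minimised exactly when $T$ is almost uniformly equidistributed modulo every power $\mathfrak{p}^k$, with minimum value $2v_{\mathfrak{p}}(\prod_{i=1}^n i!_K)$. Once this local fact is in hand, the three conditions collapse to one.

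To prove the local statement, I would expand
\begin{equation*}
v_{\mathfrak{p}}(E(S)) = \sum_{i\neq j} v_{\mathfrak{p}}(x_i - x_j) = \sum_{k\geq 1}\#\{(i,j) : i\neq j,\ x_i\equiv x_j \pmod{\mathfrak{p}^k}\}
\end{equation*}
using the layer-cake identity $v_{\mathfrak{p}}(y)=\sum_{k\geq 1}\mathbf{1}[v_{\mathfrak{p}}(y)\geq k]$. Letting $n_{k,c}$ denote the number of elements of $S$ in the residue class $c\in\O_K/\mathfrak{p}^k$, this rewrites as $\sum_{k\geq 1}\sum_{c} n_{k,c}(n_{k,c}-1)$. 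Since $x\mapsto x(x-1)$ is strictly convex on $\Z_{\geq 0}$ and $\sum_c n_{k,c}=n+1$ is fixed across $N(\mathfrak{p}^k)$ slots, each inner sum is minimised precisely when the $n_{k,c}$ differ pairwise by at most one, i.e.\ when $S$ is almost uniformly equidistributed modulo $\mathfrak{p}^k$. Writing $n+1=q_kN(\mathfrak{p}^k)+r_k$ with $0\leq r_k<N(\mathfrak{p}^k)$, the minimum of the inner sum equals $q_k(N(\mathfrak{p}^k)(q_k-1)+2r_k)$, and the elementary identity $\sum_{i=0}^{n}\lfloor i/N\rfloor = q(N(q-1)+2r)/2$ (with $n+1=qN+r$) matches this, after summation in $k$, with $2v_{\mathfrak{p}}(\prod_{i=1}^n i!_K)$.

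From here the three equivalences are formal. By Lemma \ref{lemma-equidistribution}, (1) is equivalent to $S$ being almost uniformly equidistributed modulo $\mathfrak{p}^k$ for every $\mathfrak{p}$ and every $k$; by the local statement this is the same as $v_{\mathfrak{p}}(E(S))$ attaining its minimum at every $\mathfrak{p}$, which, in view of the formula, is exactly condition (2). For (2) $\Leftrightarrow$ (3): for each fixed $\mathfrak{p}$ there is some $T$ realising the $\mathfrak{p}$-local minimum (the first $n+1$ terms of any $\mathfrak{p}$-ordering of $\O_K$, which always exists by the greedy construction), hence $(\prod_{i=1}^n i!_K)^2$ is the componentwise gcd of $\{E(T) : |T|=n+1\}$; condition (3) is precisely the statement that $E(S)$ meets this gcd.

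The main obstacle is arithmetic bookkeeping rather than anything conceptual: verifying the counting identity $\sum_{i=0}^{n}\lfloor i/N\rfloor = q(N(q-1)+2r)/2$ and keeping the convex-minimum formula aligned term-by-term in $k$ with the double sum defining $v_{\mathfrak{p}}(\prod_{i=1}^n i!_K)$. The only non-computational inputs are Lemma \ref{lemma-equidistribution}, strict convexity of $x(x-1)$, and the classical existence of local $\mathfrak{p}$-orderings.
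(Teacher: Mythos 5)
Your argument is correct, and it is worth noting that the paper itself does not prove Proposition \ref{proposition-energy} at all --- it is quoted from \cite{BFS2017} and \cite{PV}, where the proof runs through Bhargava's theory of generalized factorials: one attaches to the set $S$ its own factorial ideals $k!_S$ via $\mathfrak{p}$-orderings of $S$, shows $k!_{\O_K}$ divides $k!_S$, identifies $E(S)$ with $(\prod_{k=1}^n k!_S)^2$, and characterizes $n$-universality by $k!_S=k!_{\O_K}$ for $k\leq n$. Your route replaces all of this with a direct combinatorial computation: the layer-cake identity $v_{\mathfrak{p}}(E(S))=\sum_{k\geq 1}\sum_c n_{k,c}(n_{k,c}-1)$, strict convexity of $x\mapsto x(x-1)$ to characterize the minimizers of each inner sum as the almost uniformly equidistributed configurations, and the elementary identity $\sum_{i=0}^{n}\lfloor i/N\rfloor=q(N(q-1)+2r)/2$ to match the minimum with $2v_{\mathfrak{p}}(\prod_{i=1}^n i!_K)$ (I checked both the convex-minimum value $q_k(N_k(q_k-1)+2r_k)$ and the floor-sum identity; they agree). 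What your approach buys is a self-contained, prime-by-prime proof that makes the ``energy minimization equals optimal equidistribution'' heuristic of the survey completely explicit, at the cost of outsourcing the link between $n$-optimality and equidistribution to Lemma \ref{lemma-equidistribution} (legitimate here, since the survey states that lemma independently and beforehand, but be aware that in \cite{BFS2017} the two statements are established together, so a fully self-contained write-up would need to prove that lemma too). The two points that genuinely require care are both handled in your sketch, though tersely: that a single set can attain the minimum of every inner sum simultaneously is only needed for $(3)\Rightarrow(2)$, and there only prime-by-prime, where the initial segment of a $\mathfrak{p}$-ordering (or a $\mathfrak{p}$-adic digit construction pushed into $\O_K$ by approximation) does the job; and the factor of $2$ is accounted for because $E(S)$ is a product over ordered pairs.
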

In other words, $n$-optimal sets are the sets with $n+1$ elements which "minimize" the energy among all subsets of $\O_K$ with $n+1$ elements. 
\begin{proof}[Proof of Theorem \ref{theorem-mainqi}]
For the sake of contradiction, assume that for any $n_0$ there exists an $n$-optimal set with $n\geq n_0$. Identify $\O_K$ with its image via a fixed embedding $K\to\C$. The idea of the proof is as follows. Let $S$ be an $n$-optimal set. We sketch the proof why $S$ has to be contained in a polygon with $n+o(n)$ points from $\O_K$. With such a fine description of $S$ one can use the prime number theorem for number fields to show that there exists a prime power $\mathfrak{p}^l$ such that $S$ fails to be almost uniformly equidistributed modulo $\mathfrak{p}^l$. Together with Proposition \ref{proposition-energy}, this leads to a contradiction.

To show that an $n$-optimal set is contained in a suitable polygon with $n+o(n)$ points from $\O_K$ we use Proposition \ref{proposition-energy} and a procedure called discrete collapsing. 
Roughly speaking, the collapsing procedure takes a finite subset of $\O_K$ and makes it symmetric about a line $\ell$ by moving the points towards the line as close as possible. For the proof we will only need the formal definition of what it means to be collapsed with respect to a line. 

\begin{definition}\cite{BFS2017} 
Let $K$ be a quadratic imaginary number field. Let $T$ be a finite subset of $\O_K$. Let $l$ be a line in $\C$. The line $\ell$ divides the complex plane into two closed half-planes, say $H_1$ and $H_2$. Distinguish one of them, say $H_1$. The set $T$ is \textbf{collapsed} along the pair $(l,H_1)$ if the following conditions hold:
\begin{enumerate} 
\item Let $m$ be a line which is perpendicular to $\ell$ and contains at least one point from the set $T$. Let $x\in T\cap m$. Then every point in $\O_K$ which lies between $m\cap l$ and $x$ is in $T$. 
\item Let $m$ be a line perpendicular to $\ell$. Then $|H_1\cap m\cap T|-|H_2\cap m \cap T|\in \{0,1\}$.
\end{enumerate} 
\end{definition} 
The discrete collapsing procedure produces collapsed sets and one can show that if the set was not collapsed to begin with, it strictly decreases the norm of the energy \cite{BFS2017}.
One can show that an $n$-optimal set has to be collapsed in every direction. More precisely, from Proposition \ref{proposition-energy} we deduce the following. 

\begin{lemma} 
\label{lemma-collapsing}
Let $K$ be a quadratic imaginary number field and let $\O_K$ be its ring of integers. Let $T$ be an $n$-optimal set in $\O_K$ and let $l$ be a line in the complex plane. Then there exists a line $m$ parallel to $l$ such that the set $T$ is collapsed along the line $m$ for some choice of the distinguished half-plane. 
\end{lemma}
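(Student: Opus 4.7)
The plan is to argue by contradiction, combining the energy-minimization characterization of $n$-optimal sets in Proposition~\ref{proposition-energy}(3) with the discrete collapsing procedure alluded to in the preceding paragraph. Fix the $n$-optimal set $T$ and a line $\ell \subseteq \C$, and suppose for contradiction that no line $m$ parallel to $\ell$ witnesses $T$ being collapsed (for either choice of distinguished half-plane).

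First I would make the collapsing procedure precise. For each line $m'$ perpendicular to $\ell$ that meets $T$, replace $T \cap m'$ by the unique set of $|T \cap m'|$ consecutive points of $\O_K \cap m'$ abutting $\ell$; then rigidly shift these "columns" perpendicular to $\ell$ so that there is a single line $m \parallel \ell$ with $|T \cap m' \cap H_1| - |T \cap m' \cap H_2| \in \{0,1\}$ for every perpendicular $m'$. Call the resulting set $T^\ast$; by construction $|T^\ast|=|T|$ and $T^\ast$ is collapsed along $m$ for a suitable choice of $H_1$. The key input I would import from \cite{BFS2017} is that if $T$ was not already collapsed along any line parallel to $\ell$, then
\[
\bigl|N_{K/\Q}\bigl(E(T^\ast)\bigr)\bigr| \;<\; \bigl|N_{K/\Q}\bigl(E(T)\bigr)\bigr|.
\]
Granting this, Proposition~\ref{proposition-energy}(3) applied with $T$ optimal and $T^\ast$ arbitrary gives $E(T) \mid E(T^\ast)$, hence $|N_{K/\Q}(E(T))| \leq |N_{K/\Q}(E(T^\ast))|$, contradicting the displayed strict inequality. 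Thus some line $m \parallel \ell$ along which $T$ is collapsed must exist.

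The main obstacle is the strict-decrease statement for the collapsing procedure itself, which I would prove in two steps. First, sliding the points on a single perpendicular line $m'$ toward $\ell$ does not increase $\prod_{i \neq j}|x_i - x_j|$; this reduces to a one-dimensional rearrangement inequality for pairwise distances of lattice points on $m' \cap \O_K$. Second, the equalization step across different columns strictly decreases the product unless condition~(2) of the collapsing definition was already satisfied; strictness here uses the strict convexity of $|z|^2$ on $K \otimes_\Q \R \cong \C$. The imaginary quadratic hypothesis enters precisely because $\O_K$ is a full planar lattice, so the one-dimensional slides stay inside $\O_K$ — which is the feature that fails for other number fields and forces the switch to the potential-theoretic machinery of \cite{FS2018}.
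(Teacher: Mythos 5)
Your proposal follows essentially the same route as the paper, which deduces the lemma from Proposition~\ref{proposition-energy}(3) together with the fact (imported from \cite{BFS2017}, as you do) that the discrete collapsing procedure strictly decreases the norm of the energy of a non-collapsed set. The only substantive content you add beyond the paper's sketch is the outline of why collapsing decreases the energy norm, and there your appeal to the strict convexity of $|N_{K/\Q}(\cdot)|$ is exactly the mechanism the paper's remark identifies as essential to the imaginary quadratic case.
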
 
For the proof we refer to \cite{BFS2017}. 
\begin{definition} 
Let $l$ be a line in the complex plane. A \textbf{strip} along $l$ is a closed domain which is bounded by two lines parallel to $l$ and symmetric with respect to $l$. A strip parallel to $l$ is a strip along a line parallel to $l$. 
\end{definition} 
Let $K=\Q(\sqrt{d})$. The proof differs in the cases $d\not\equiv -1 \mod 4 $ and $d\equiv -1 \mod 4$. We start with the case $d\not\equiv -1 \mod 4$. Then, $\O_K=\mathbb Z+\mathbb Z\sqrt{d}$. By Lemma \ref{lemma-collapsing}, the set $S$ has to be collapsed along some vertical line $\ell_1$ and some horizontal line $\ell_2$. By the Dirichlet's theorem on prime numbers in arithmetic progressions, one can find a rational prime number $p_1$ such that $p_1=\sqrt{n}+o(\sqrt n), p_1>n+1$ and $p_1$ is prime in $\O_K$. Since $S$ is supposed to be almost uniformly equidistributed modulo $p_1$ by Lemma \ref{lemma-equidistribution}, the intersection of $S$ with any horizontal or vertical line can contain at most $p_1$ consecutive points of $\O_K$.
Therefore, $S$ has to contained in the intersection of two stripes along $\ell_1,\ell_2$ of widths $p_1$ and $p_1 \sqrt{d}$ respectively. This intersection is a rectangle containing roughly $n+o(n)$ lattice points. Since the set $S$ has $n+1$ elements, this means that it has to fill the rectangle perfectly, missing only $o(n)$ points. 

\begin{figure}
    \centering
    \includegraphics[scale=0.5, angle=270]{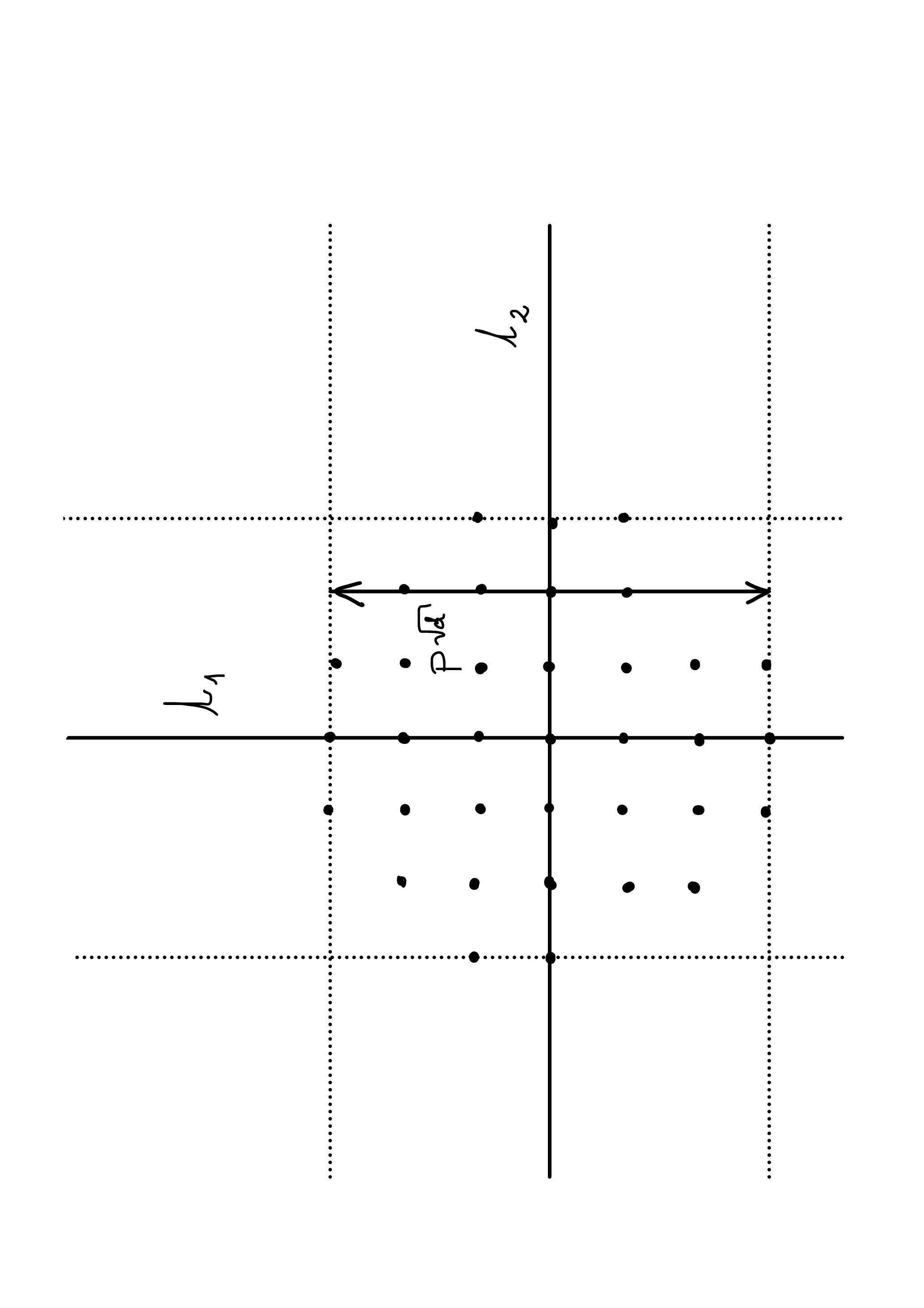} 
    \caption{$d\not\equiv -1\mod 4$}
    \label{fig:rect}
\end{figure}
\begin{figure}
    \centering
    \includegraphics[scale=0.5]{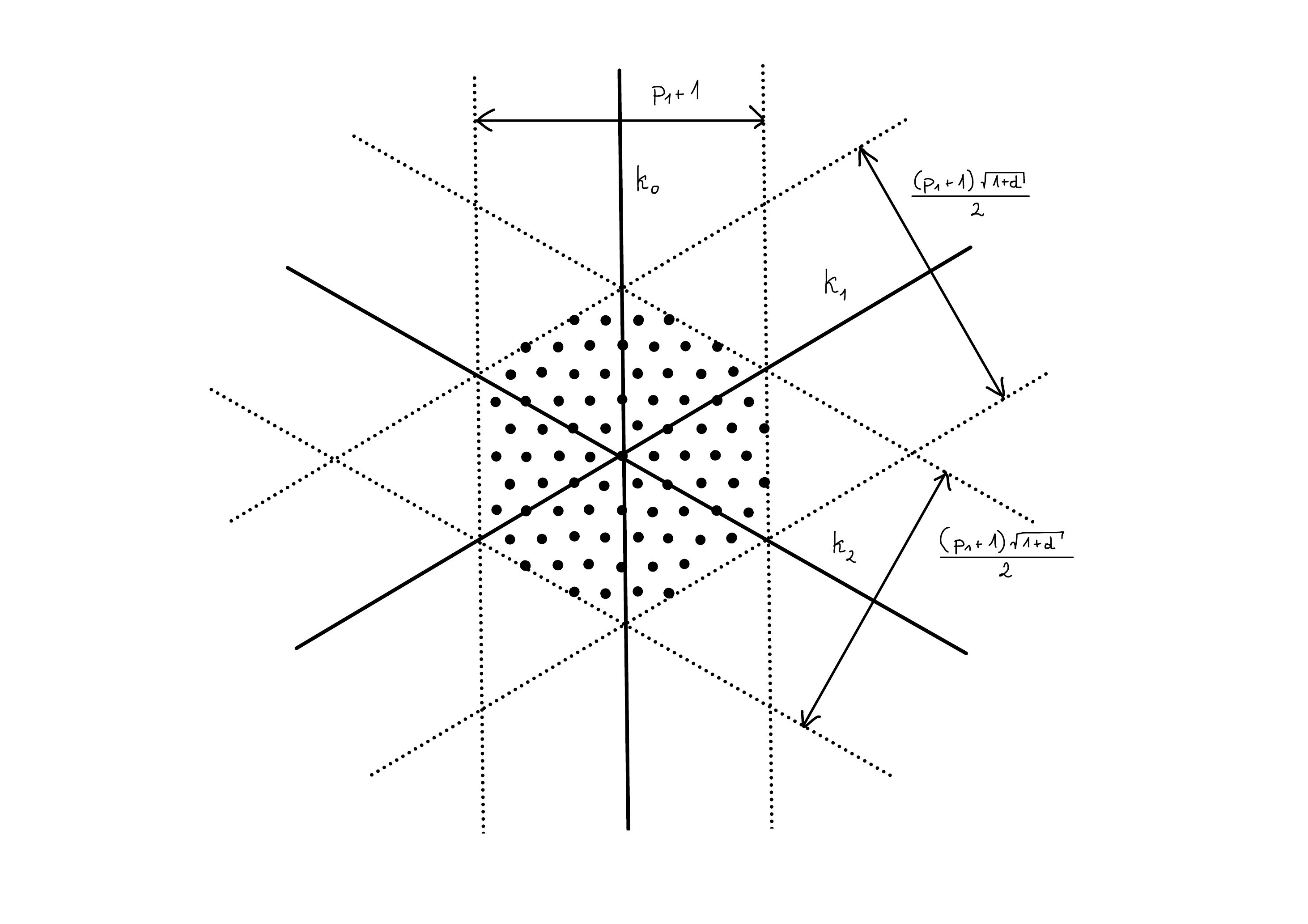}
    \caption{$d\equiv -1\mod 4$}
    \label{fig:hex}
\end{figure}

In the  case $d\equiv -1 \mod 4$, by Lemma \ref{lemma-collapsing}, $S$ is collapsed along the lines 
\begin{align*} 
k_0&\text{ parallel to }\left\{ iy | \ \ y\in\mathbb R\right\}, \\
k_1&\text{ parallel to }\left\{ x+iy| \ \ y=-\frac{x}{\sqrt{d}}\right\},\\
k_2&\text{ parallel to }\left\{ x+iy| \ \ y=\frac{x}{\sqrt{d}}\right\}. 
\end{align*} 
Again, using Lemma \ref{lemma-equidistribution}, we deduce that in the case $d\equiv -1 \mod 4$ an $n$-optimal set has to be contained in a hexagon with $n+o(n)$ points from $\O_K$. Since $S$ has $n+1$ points, this means that the set $S$ fills the hexagon perfectly, missing only $o(n)$ points. Using Dirichlet's theorem on prime numbers in arithmetic progressions, we can find a rational prime $p_1$, non-split in $\O_K$, modulo which the set $S$ fails to be almost uniformly equidistributed. The last part of the argument uses only the geometry of the rectangle and the hexagon. For an appropriate size of $p_1$, both shapes contain too many points congruent mod $p_1$ (see Figure \ref{figure-notdistr}).   
\end{proof} 
\begin{figure} 
\centering 
\includegraphics[scale=0.3]{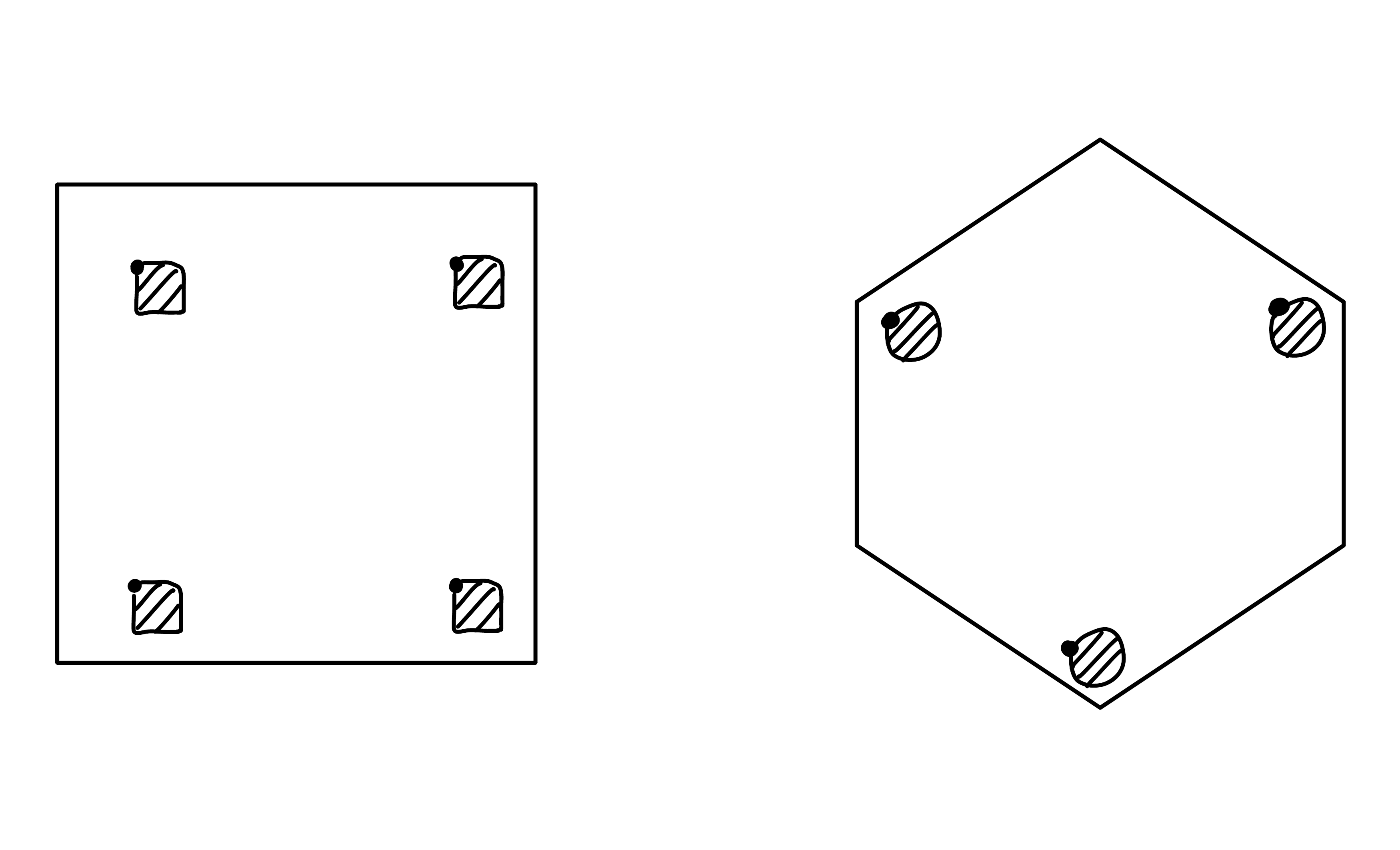} 
\caption{}
\label{figure-notdistr}
\end{figure}
\begin{remark} 
The key ingredient in the proof for the quadratic imaginary number fields is the fact that an $n$-optimal set must be collapsed in any direction. The proof of this property relies crucially on the fact that the norm $x\mapsto |N_{K/\Q}(x)|$ is a convex function. This is no longer the case in a general number field. 
\end{remark}

\section{Estimate on the energy of $n$-optimal sets. }
\label{section-energy}
Let $K$ be a number field, this time not necessarily quadratic imaginary. 
Using Proposition \ref{proposition-energy} and an estimate on the norm of the factorial ideals due to Lamoureux \cite{L}, it is possible to estimate the energy of $n$-optimal sets. The formulas will use the Euler-Kronecker constants, defined below.
\begin{definition}\cite{Iha} 
Let $K$ be a number field and let $\zeta_K(z)$ be the Dedekind zeta function of $K$. Let \begin{equation*} 
\zeta_K(z)=\frac{c_{-1}}{z-1}+c_0+c_1(z-1)+\ldots 
\end{equation*}
be the Laurent expansion of $\zeta_K$ at $s=1$. The quotient $\frac{c_0}{c_{-1}}$ is called the \textbf{Euler-Kronecker constant} of $K$ and we denote it by $\gamma _K$.  
\end{definition} 
In the case $K=\Q$, the Euler-Kronecker constant $\gamma _{\Q}$ is called the Euler-Mascheroni constant and is given by the following formula
\begin{equation*} 
\gamma_{\Q}=\lim_{n\to \infty}\left( \sum_{i=1}^n\frac{1}{i}-\log n\right).
\end{equation*}
For more information on the Euler-Kronecker constants see \cite{Iha}. 
By Proposition \ref{proposition-energy}, to estimate the energy of $n$-optimal sets it is enough to estimate $K$-factorials. Thanks to Lamoureux \cite{L} we have the following estimate.
\begin{theorem}\cite{L} 
Let $K$ be a number field. Then,
\begin{equation*} 
\log N_{K/\Q}(n!_K)=n\log n -n(1+\gamma_K-\gamma_{\Q})+o(n).
\end{equation*}
\end{theorem}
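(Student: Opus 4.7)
The plan is to reformulate the statement as a partial‐sum estimate for a Dirichlet series with known Laurent expansion at $s=1$, and then extract the leading asymptotic via Perron's formula together with a contour shift past the pole at $s=1$.

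First I would unfold $w_\mathfrak{p}(n)=\sum_{i\geq 1}\lfloor n/N(\mathfrak{p}^i)\rfloor$ and interchange the order of summation, using $\lfloor y\rfloor=\#\{k\geq 1:k\leq y\}$, to obtain
$$\log N_{K/\Q}(n!_K) \;=\; \sum_{k=1}^n \psi_K(n/k) \;=\; \sum_{N\leq n} f(N),$$
where $\psi_K$ denotes the Chebyshev function of $K$ and $f(N)=\sum_{kN(\mathfrak{a})=N}\Lambda_K(\mathfrak{a})$. The Dirichlet series of $f$ factors as
$$F(s) \;=\; \zeta(s)\cdot\left(-\frac{\zeta_K'(s)}{\zeta_K(s)}\right),\qquad \Re(s)>1,$$
which has a double pole at $s=1$ and inherits all analytic information from the two zeta functions.

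Next I would apply a truncated Perron formula and shift the contour to a line $\Re(s)=\sigma<1$ inside a classical zero-free region for $\zeta_K$ (supplied by Landau's prime ideal theorem). Standard convexity estimates for $\zeta_K$ and $\zeta_K'/\zeta_K$ in the critical strip make the shifted integral and the horizontal connectors $o(n)$, so the asymptotic is determined by the residue of $F(s)\,n^s/s$ at $s=1$. To compute it I would expand
$$\zeta(s)=\frac{1}{s-1}+\gamma_\Q+O(s-1),\qquad -\frac{\zeta_K'(s)}{\zeta_K(s)}=\frac{1}{s-1}-\gamma_K+O(s-1),$$
the second identity coming from logarithmic differentiation of $\zeta_K$ and the definition of $\gamma_K$, together with $n^s/s=n[1+(\log n-1)(s-1)+O((s-1)^2)]$. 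Multiplying out gives
$$F(s)\,\frac{n^s}{s}=\frac{n}{(s-1)^2}+\frac{n(\log n-1+\gamma_\Q-\gamma_K)}{s-1}+O(1),$$
whose residue at $s=1$ is precisely $n\log n-n(1+\gamma_K-\gamma_\Q)$, matching the target.

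The hard part is the analytic bookkeeping behind the contour shift: one has to invoke an effective zero-free region for $\zeta_K$ near $\Re(s)=1$ together with convexity bounds for $\zeta_K$ and its logarithmic derivative, both classical but technical. Once this input is in place the residue calculation is forced, and the appearance of $\gamma_K-\gamma_\Q$ in the answer is dictated entirely by the leading Laurent coefficients of $\zeta$ and $-\zeta_K'/\zeta_K$ at their common simple pole $s=1$.
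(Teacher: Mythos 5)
The paper does not prove this statement; it is quoted from Lamoureux's thesis \cite{L} and used as a black box, so there is no internal argument to compare yours against. Judged on its own terms, your outline is a correct and essentially standard analytic route to the result. The combinatorial unfolding is right: $\log N_{K/\Q}(n!_K)=\sum_{\mathfrak{p},i}\lfloor n/N(\mathfrak{p})^i\rfloor\log N(\mathfrak{p})=\sum_{k\le n}\psi_K(n/k)$, which is the summatory function of $f=1*\Lambda_K^{\mathrm{norm}}$ with Dirichlet series $\zeta(s)\bigl(-\zeta_K'(s)/\zeta_K(s)\bigr)$. Your Laurent expansions are correct ($-\zeta_K'/\zeta_K$ has leading coefficients $\frac{1}{s-1}-\gamma_K$ precisely because $\gamma_K=c_0/c_{-1}$), and the residue of $F(s)n^s/s$ at $s=1$ does come out to $n\log n-n(1+\gamma_K-\gamma_\Q)$, matching the target. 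This is a clean way to see why the constant $\gamma_K-\gamma_\Q$ appears.

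One technical point deserves correction: you cannot shift to a fixed vertical line $\Re(s)=\sigma<1$, because the classical zero-free region for $\zeta_K$ has the shape $\sigma>1-c/\log(|t|+2)$ and contains no vertical line to the left of $\Re(s)=1$; the poles of $\zeta_K'/\zeta_K$ at nontrivial zeros would be crossed. The standard fix is exactly the truncated Perron formula you invoke: truncate at height $T$, shift only to $\sigma=1-c/\log T$, bound $\zeta_K'/\zeta_K$ by $O(\log^2(|t|+2))$ in that region, and optimize $T$ to get an error $O\bigl(n\exp(-c'\sqrt{\log n})\bigr)$, which is more than enough for $o(n)$. (A possible real exceptional zero $\beta<1$ of $\zeta_K$ contributes only $O(n^\beta)$ for the fixed field $K$, hence is harmless.) With that adjustment the argument is complete, and it is in the same spirit as the prime-ideal-theorem-based derivation in \cite{L}.
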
 
Using Proposition \ref{proposition-energy} we deduce
\begin{corollary}\cite{BFS2017}
\label{corollary-energyforsets}Let $K$ be a number field and let $\O_K$ be its ring of integers. Let $S$ be an $n$-optimal subset of $\O_K$. Then
\begin{equation*} 
\log N_{K/\Q}(E(S))=n^2\log n -n^2(\frac{3}{2}+\gamma _K-\gamma_{\Q})+o(n^2).
\end{equation*}
Moreover, for every subset $T\subseteq \O_K$ with $|T|=n+1$ we have
\begin{equation*} 
\log N_{K/\Q}(E(T))\geq n^2\log n-n^2(\frac{3}{2}+\gamma_K-\gamma_{\Q})+o(n^2). 
\end{equation*} 
\end{corollary}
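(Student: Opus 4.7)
The plan is to reduce everything to Lamoureux's asymptotic for $\log N_{K/\Q}(n!_K)$ via Proposition~\ref{proposition-energy}. By part (2) of that proposition, any $n$-optimal set $S$ satisfies $E(S) = \bigl(\prod_{i=1}^n i!_K\bigr)^2$, so applying the multiplicative norm $N_{K/\Q}$ and taking logarithms gives
\begin{equation*}
\log N_{K/\Q}(E(S)) \;=\; 2\sum_{i=1}^n \log N_{K/\Q}(i!_K).
\end{equation*}
The whole equality claim thus reduces to an asymptotic evaluation of this sum.

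Substituting Lamoureux's estimate $\log N_{K/\Q}(i!_K) = i\log i - i(1+\gamma_K-\gamma_\Q) + \varepsilon_i$ with $\varepsilon_i = o(i)$, I would evaluate the three resulting sums separately. The leading piece $\sum_{i=1}^n i\log i$ is handled by comparison with $\int_1^n x\log x\,dx = \tfrac{n^2}{2}\log n - \tfrac{n^2}{4} + \tfrac14$, yielding $\sum_{i=1}^n i\log i = \tfrac{n^2}{2}\log n - \tfrac{n^2}{4} + O(n\log n)$; the linear piece is $\sum_{i=1}^n i = \tfrac{n^2}{2} + O(n)$; and a routine Cesaro argument absorbs $\sum_{i=1}^n \varepsilon_i$ into $o(n^2)$. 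Multiplying by two and collecting constants $2\cdot\tfrac14 + 1 = \tfrac32$ produces
\begin{equation*}
\log N_{K/\Q}(E(S)) \;=\; n^2\log n - n^2\bigl(\tfrac32 + \gamma_K - \gamma_\Q\bigr) + o(n^2),
\end{equation*}
which is the first claim.

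For the moreover part, the inequality follows from the \emph{unconditional} divisibility $\bigl(\prod_{i=1}^n i!_K\bigr)^2 \mid E(T)$ valid for every $T\subseteq \O_K$ with $|T|=n+1$; applying $\log N_{K/\Q}$ and re-using the computation above gives the stated lower bound at once. When an $n$-optimal set happens to exist, this divisibility is exactly Proposition~\ref{proposition-energy}(2)--(3). In the general case it is a standard consequence of Bhargava's theory of $\mathfrak{p}$-orderings, which guarantees prime by prime that $v_\mathfrak{p}\!\bigl(\prod_{i<j}(t_i-t_j)\bigr)\ge \sum_{k=1}^n w_\mathfrak{p}(k)$ for any choice of $n+1$ elements $t_0,\ldots,t_n\in\O_K$. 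The main ``obstacle'' in the whole proof is really only the verification of this unconditional divisibility; once Lamoureux's estimate and that divisibility are in hand, the remainder is elementary bookkeeping with the integral asymptotic for $\sum i\log i$.
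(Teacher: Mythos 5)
Your proposal is correct and follows exactly the route the paper indicates: Proposition~\ref{proposition-energy}(2) gives $E(S)=\bigl(\prod_{i=1}^n i!_K\bigr)^2$, Lamoureux's estimate is summed with the standard integral comparison for $\sum i\log i$, and the constants assemble to $\tfrac32+\gamma_K-\gamma_\Q$ as you computed. You also correctly identify the one point needing care in the ``moreover'' part --- that the divisibility $\bigl(\prod_{i=1}^n i!_K\bigr)^2\mid E(T)$ must hold unconditionally (not merely as the biconditional in Proposition~\ref{proposition-energy}(3)), which indeed follows prime by prime from Bhargava's theory of $\mathfrak{p}$-orderings.
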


\section{$n$-optimal sets for an arbitrary number field}
\label{section-numberfields}
As we already mentioned, the methods for quadratic imaginary number fields cannot be adapted to the case of a general number field, as in general the norm is not convex. In this section we sketch the proof for an arbitary number field \cite{FS2018}. 
Fix a number field $K$ of degree $N$ and write $\O_K$ for the ring of integers.

We argue by contradiction. Assume there exists a sequence $S_{n_i}$ of $n_i$-optimal sets where $n_i$ tends to infinity. Let $V=K\otimes_{\Q}\R\cong \R^{r_1}\times \C^{r_2}$. Write $d=r_1+r_2$. The absolute value of the norm $|N_{K/\Q}(\cdot)|$ extends to the map $\|\cdot\|\colon V\to\mathbb R$ defined by $\|v\|=\prod_{i=1}^{r_1}|v_i|\prod_{i=r_1+1}^{d}|v_i|^2$ for $v=(v_1, \ldots ,v_d)$. 

\subsection{Enclosure of $n$-optimal sets in cylinders}
The first step is to show that we can enclose $S_{n_i}$ in a cylinder of the volume $n_i+o(n_i)$. 
\begin{definition} 
A cylinder $\mathcal{C}$ in $V$ is a coordinate-wise product of balls:
\begin{equation*} 
\mathcal{C}=\prod_{i=1}^{r_1}B_{\R}(x_i,r_i)\times \prod_{i=r_1+1}^{d}B_{\C}(x_i,r_i),
\end{equation*} 
where $x_i\in \R$ for $i=1, \ldots , r_1$, $x_i\in \C$ for $i=r_1+1,\ldots ,d$ and $r_i\in \R_{\geq 0}$ for $i=1, \ldots ,d.$
\end{definition}
The volume of a cylinder $\mathcal{C}$ is defined as its Lebesgue measure. 

\begin{theorem}{\cite[Theorem 3.1]{FS2018}}
\label{theorem-shape}
There exists a positive constant $\theta$ dependent only on $K$ such that for every $n$-optimal set $S\subseteq \O_K$ there exists a cylinder $\mathcal{C}$ of volume $\theta n$ with $S\subseteq \mathcal{C}$. 
\end{theorem}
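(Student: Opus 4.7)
Plan. Let $S = \{x_0, \ldots, x_n\}$ be $n$-optimal. After translating each coordinate, I may assume the minimal enclosing cylinder of $S$ has the form $\mathcal{C} = \prod_\sigma B(0, r_\sigma)$ (in the decomposition $V \cong \R^{r_1} \times \C^{r_2}$), with $\mathrm{vol}(\mathcal{C}) = c_{r_1,r_2}\prod_\sigma r_\sigma^{e_\sigma}$. Set $\tilde x_i := (\sigma(x_i)/r_\sigma)_\sigma$; then $\tilde S := \{\tilde x_i\} \subseteq \prod_\sigma B(0,1) =: \tilde{\mathcal{C}}$. A direct change of variable gives
\[ \log N_{K/\Q}(E(S)) = n(n+1)\log \mathrm{vol}(\mathcal C) + \tilde E(S) + O(n^2), \]
where $\tilde E(S) := \sum_{i\neq j} \sum_\sigma e_\sigma \log|\tilde\sigma(x_i)-\tilde\sigma(x_j)|$. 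Combined with Corollary~\ref{corollary-energyforsets}, which gives $\log N_{K/\Q}(E(S)) = n^2\log n + O(n^2)$, this reads
\[ \tilde E(S) = n^2\log n - n(n+1)\log\mathrm{vol}(\mathcal C) + O(n^2). \]
Hence a uniform lower bound $\tilde E(S) \geq -C(K)\, n^2$, with $C(K)$ depending only on $K$, immediately yields $\log \mathrm{vol}(\mathcal C) \leq \log n + O(1)$, i.e.\ $\mathrm{vol}(\mathcal C)\leq \theta n$ for some $\theta = \theta(K)$.

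The core of the proof is therefore to establish this uniform lower bound on $\tilde E(S)$. The naive pair-by-pair estimate $|N_{K/\Q}(x_i-x_j)| \geq 1$, rescaled, yields only $\tilde E(S) \geq -n(n+1)\log \mathrm{vol}(\mathcal C) + O(n^2)$, which is too weak when $\mathrm{vol}(\mathcal C)$ is large. The improvement must exploit two extra structural facts: (a) by minimality of each $r_\sigma$, the projection $\tilde\sigma(S)$ contains points near the boundary of $B(0,1)$ in every coordinate, so $\tilde S$ genuinely fills $\tilde{\mathcal C}$; and (b) by Lemma~\ref{lemma-equidistribution}, $S$ is almost uniformly equidistributed modulo every prime power $\mathfrak p^\ell$, which prevents pathological clustering of $\tilde S$ within $\tilde{\mathcal C}$. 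Together these translate, via a Frostman-type estimate on the empirical measure $\tilde\mu_n := \frac{1}{n+1}\sum\delta_{\tilde x_i}$, into the needed lower bound on its twisted logarithmic self-energy.

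The main obstacle is making the Frostman-type bound quantitative and uniform in $S$. The rescaled lattice $\tilde\O_K$ has covolume $c_{r_1,r_2}\sqrt{|\Delta_K|}/\mathrm{vol}(\mathcal C)$, so when $\mathrm{vol}(\mathcal C)$ is large the lattice becomes arbitrarily dense and continuous potential-theoretic bounds break down without further input. A careful covering argument -- partitioning $\tilde{\mathcal C}$ into cells matched to both the lattice spacing and the equidistribution scale of a prime power $\mathfrak p^\ell$ whose norm is chosen of order $n$ (using the prime number theorem for number fields) -- is what ultimately converts the discrete equidistribution constraint into the continuous energy estimate, thereby closing the argument.
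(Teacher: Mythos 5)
Your reduction is sound as far as it goes: writing $\log N_{K/\Q}(E(S)) = n(n+1)\log\mathrm{vol}(\mathcal C) + \tilde E(S) + O(n^2)$ and invoking Corollary \ref{corollary-energyforsets} does show that the theorem would follow from a uniform lower bound $\tilde E(S)\geq -C(K)n^2$ on the rescaled energy. The problem is that this lower bound is exactly where the entire content of the theorem lives, and you have not proved it -- you have only named it ("a Frostman-type estimate", "a careful covering argument") and listed the inputs you hope will yield it. The paper explicitly identifies this enclosure step as one of the main difficulties of \cite{FS2018}, and its proof there runs through a genuinely quantitative lattice-point count (Proposition 2.5 of \cite{FS2018}, bounding $|\{x\in\O_K:\ \|x(a-x)\|\leq X^2\}|$), whose proof in turn needs the Aramaki--Ikehara Tauberian theorem and the Baker--W\"ustholz bound on linear forms in logarithms. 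None of that machinery is replaceable by the soft considerations you invoke.

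Concretely, the obstruction your sketch does not confront is the degeneracy of the norm form: for $d=r_1+r_2>1$ the sub-level sets $\{v:\ \|v-w\|\leq\epsilon\}$ are not small balls but neighbourhoods of the coordinate hyperplanes (equivalently, the unit group is infinite), so a pair of distinct lattice points can contribute as little as $-\log\mathrm{vol}(\mathcal C)$ to $\tilde E(S)$ even when the points are Euclidean-far apart, and when $\mathrm{vol}(\mathcal C)$ is superpolynomial in $n$ even a vanishing proportion of such pairs destroys the bound $\tilde E(S)\geq -Cn^2$. To rule this out you must count, uniformly in $a$, the lattice points $x$ with $\|x(a-x)\|$ small -- counting in hyperbolic shells, where the error term (not the volume term) is the issue; this is precisely what \cite[Proposition 2.5]{FS2018} does and why transcendence-theoretic input appears. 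Your two structural facts do not substitute for it: minimality of the enclosing cylinder only puts one point near each face, and almost uniform equidistribution modulo a prime power of norm $\asymp n$ controls the number of points of $S$ in cosets of $\mathfrak p^\ell$, i.e.\ clustering at a single scale in round cells, not the multi-scale accumulation of pairs inside the norm's degenerate regions. As written, the proposal restates the theorem as an energy inequality rather than proving it.
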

And as a consequence we get:
\begin{corollary} 
\label{corollary-cylinder}
There exists a positive constant $A>0$ depending only on $K$ such that the set $\Omega =B_{\R}(0,A)^{r_1}\times B_{\C}(0,A)^{r_2}$ has the following property. Let $S\subseteq \O_K$ be an $n$-optimal set. Then, there exist $s,t\in V$ such that $||s||=n|\Delta_K|^{1/2}$ and $s^{-1}(S-t)\subseteq \Omega$.   
\end{corollary}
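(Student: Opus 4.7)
My plan is to derive Corollary \ref{corollary-cylinder} as an essentially mechanical reparametrisation of Theorem \ref{theorem-shape}. First I would apply that theorem to the $n$-optimal set $S$ to extract a cylinder $\mathcal{C}=\prod_{i=1}^{r_1}B_{\R}(x_i,r_i)\times\prod_{i=r_1+1}^{d}B_{\C}(x_i,r_i)$ of volume $\theta n$ containing $S$, and take $t=(x_1,\ldots,x_d)$ to be the centre of that cylinder, so that $S-t$ lies in the product of balls of radii $r_i$ about the origin.

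Next I would introduce a scaling vector $s\in V$ which is isotropic with respect to $\mathcal{C}$: namely, $s_i=c\,r_i$ for a single positive scalar $c$ depending only on $K$. With this ansatz the multiplicative norm $\|s\|=\prod_{i=1}^{r_1}|s_i|\prod_{i=r_1+1}^{d}|s_i|^2$ collapses, using $r_1+2r_2=N$, to $c^N\cdot \Leb(\mathcal{C})/(2^{r_1}\pi^{r_2}) = c^N\theta n/(2^{r_1}\pi^{r_2})$. The required equation $\|s\|=n|\Delta_K|^{1/2}$ then pins down $c$ as an explicit $N$-th root of a quantity built out of $\theta$, $|\Delta_K|$, $r_1$, $r_2$, so $c$ depends only on $K$. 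Setting $A:=1/c$, the coordinate-wise containment $|x'_i-x_i|\le r_i = A|s_i|$ for each $(x'_1,\ldots,x'_d)\in S$ then immediately yields $s^{-1}(S-t)\subseteq \Omega$.

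The one point that needs a moment of care, rather than any real difficulty, is that the construction needs every $r_i$ to be strictly positive so that $s$ is coordinate-wise invertible; this is automatic from $\Leb(\mathcal{C})=\theta n>0$. Beyond this bookkeeping I do not foresee a genuine obstacle: all of the analytic content is already absorbed into Theorem \ref{theorem-shape}, and what remains is a single one-parameter normalisation matching $\|s\|$ to the prescribed value $n|\Delta_K|^{1/2}$ without distorting the shape of the cylinder.
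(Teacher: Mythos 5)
Your derivation is correct and is exactly the intended one: the paper states Corollary \ref{corollary-cylinder} as an immediate consequence of Theorem \ref{theorem-shape} without writing out the reparametrisation, and your choice of $t$ as the centre, $s_i=c\,r_i$ with $c^N=2^{r_1}\pi^{r_2}|\Delta_K|^{1/2}/\theta$, and $A=1/c$ fills in that bookkeeping correctly (including the observation that $\theta n>0$ forces every $r_i>0$).
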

Theorem \ref{theorem-shape} was implicit in the proof of Theorem \ref{theorem-main} for $K=\Q(i)$ \cite{PV} and for $K$ quadratic imaginary \cite{BFS2017}. Indeed, in Section \ref{section-quadratic imaginary} we remark that $S$ is contained in a convex polygon of volume $|\Delta_K|^{1/2}n+o(n)$ which can be always enclosed in a cylinder of volume $\theta n$. It was proved using the collapsing procedure which heavily relies on the fact that the norm $N_{K/\Q}$ is convex for imaginary quadratic number field $K$. 
Proving Theorem \ref{theorem-shape} for a general case was one of the main difficulties in proving Theorem \ref{theorem-main}. One of the main ingredients is a result on counting the number of $x\in \O_K$ such that $\|x(a-x)\|\leq X^2$ for some $X>0$ and $a\in \O_K$ such that $\|a\|\geq Xe^{-B}$ where $B\in\R$ is fixed. 
This result, in a sense, substitutes for the role of the convexity of the norm. To give a precise statement of the result we introduce the notion of a good fundamental domain. 
\begin{definition} 
A \textbf{good fundamental domain} of $\O_K^{\times}$ in $V^{\times}$ is a set $\mathcal{F}$ which is a finite union of convex closed cones in $V^{\times}$ such that $\mathcal{F}/\R^{\times}$ is compact in the projective space $\mathbb{P}(V)$, $V^{\times}=\bigcup_{\lambda\in\O_K^{\times}}\lambda\mathcal{F}$, $\mathrm{int}\mathcal{F}\cap \lambda(\mathrm{int}\mathcal{F})=\emptyset$ for every $\lambda \in\O_K^{\times}$, $\lambda\neq 1$ and $\partial\mathcal{F}$ does not contain non-zero points from $\O_K$. 
\end{definition}

Fix a good fundamental domain $\mathcal{F}$. Let $a\in \O_K$, $a\neq 0$ and $X>0$. Define
\begin{equation*} 
S(a,X)=\{(x,\lambda)\in (\mathcal{F}\cap\O_K)\times \O_K^{\times}| \ \ \|x(a-x\lambda^{-1})\|\leq X^2,\ \ \|x\|\leq X\}. 
\end{equation*}
Denote $\logp x:=\log x$ if $x>1$ and $\logp x:=0$ otherwise.  
\begin{proposition}{\cite[Propositon 2.5]{FS2018}} 
Let $K$ be a number field of degree $N$ with $d$ Archimedean places. Let $B\in \R$. Let $\kappa=\frac{1}{3}$ if $N=1$ and $\kappa=\min\left\{\frac{1}{2N(N-1)},\frac{1}{4N-1}\right\}$ otherwise. Fix a good fundamental domain $\mathcal{F}$. There exist constants $\theta_1, \theta _2, \theta _3, \theta_4$ dependent only on $K,B$ and $\mathcal{F}$ such that for every $X>0$ and $a\in\O_K$ such that $\|a\|\geq Xe^{-B}$ we have
\begin{enumerate} 
\item $|S(a,X)|\leq \theta_1X^{1+\kappa}\|a\|^{-\kappa}+\theta_2(\log X)^{2d-2}+\theta_3\logp\logp\logp\log \|a\|+\theta_4$.
\item Suppose $a\in \mathcal{F}$. Then, for every $\varepsilon>0$ there exists $M$ such that 
\begin{align*} 
&|\{(x,\lambda)\in S(a,X)| \ \ \|\lambda\|_{\infty}\geq M\}|\leq\\ &\varepsilon X^{1+\kappa}\|a\|^{-\kappa}+\theta_2(\log X)^{2d-2}+\theta_3\logp\logp\logp\log\|a\|+\theta_4.
\end{align*} 
\end{enumerate}
\end{proposition}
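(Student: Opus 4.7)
The plan is as follows. First I would reformulate $|S(a,X)|$ using $\|\lambda\|=1$ for every $\lambda\in\O_K^\times$: this gives $\|x(a-x\lambda^{-1})\|=\|x\|\cdot\|a\lambda-x\|$, so setting $b:=a\lambda$ (an element of $a\O_K^\times\subset\O_K$ with $\|b\|=\|a\|$) realizes $S(a,X)$ as the set of pairs $(x,b)\in(\mathcal{F}\cap\O_K)\times(a\O_K^\times)$ with $\|x\|\leq X$ and $\|x\|\cdot\|b-x\|\leq X^2$. Equivalently
\begin{equation*}
|S(a,X)|=\sum_{x\in\mathcal{F}\cap\O_K,\,\|x\|\leq X}\#\bigl\{b\in a\O_K^\times:\|b-x\|\leq X^2/\|x\|\bigr\}.
\end{equation*}

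My main strategy is a dyadic decomposition of $\|x\|$ into scales $Y<\|x\|\leq 2Y$ combined with two complementary counts. The outer count of $x\in\mathcal{F}\cap\O_K$ in the dyadic shell is $O(Y(\log Y)^{d-1})$, with constant depending on $K$ and $\mathcal{F}$, by the classical Landau-type asymptotic for principal ideals of bounded norm intersected with a cone. The inner count of units $\lambda$ with $\|a\lambda-x\|\leq X^2/\|x\|$ is controlled via the logarithmic embedding: Dirichlet's unit theorem realizes $\O_K^\times$ modulo torsion as a rank $d-1$ lattice in a hyperplane of $\R^d$, and the shape constraint on $a\lambda-x$ cuts out a polytope of controlled diameter in each of the $d-1$ directions, giving at most $O\bigl(\logp(X^2/(\|x\|\cdot\|a\|))\bigr)^{d-1}$ admissible units plus a lower-order remainder. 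Summing the product over dyadic scales accounts for the $\theta_2(\log X)^{2d-2}$ contribution (two $(d-1)$-fold log factors, one from $x$ and one from $\lambda$), while edge contributions from $Y$ very small or very close to $X$ give the $\theta_3\logp\logp\logp\log\|a\|$ term and absorb further constants into $\theta_4$.

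The heart of the estimate is the main term $\theta_1 X^{1+\kappa}\|a\|^{-\kappa}$. The naive product of outer and inner bounds gives a total of at most $X^2/\|a\|$ up to log factors, which is no better than $X$ when $\|a\|\asymp X$. To beat this one must extract a power saving of $\kappa$ in the intermediate range $\|x\|\asymp\sqrt{X\|a\|}$, where both $\|x\|$ and $\|a\lambda-x\|$ are of order $\sqrt{X\|a\|}$. The plan is to use that $x(b-x)\in\O_K$ has norm at most $X^2$, and to count pairs $(x,y)$ with $x+y=b$ and $\|xy\|\leq X^2$ via a sub-convex lattice-point estimate in the variety $\{(u,v)\in V\times V:u+v=b\}$ intersected with $\O_K\times\O_K$. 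Combining such a count with a divisor-type bound on the number of factorizations of ideals of norm $\leq X^2$ in $\O_K$ yields two candidate estimates whose optimization produces the exponents $1/(2N(N-1))$ and $1/(4N-1)$, with $\kappa$ being the minimum of the two.

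For part~(2), the restriction $\|\lambda\|_\infty\geq M$ corresponds under the log embedding to units lying outside a large compact set. Because $a\in\mathcal{F}$ and $\mathcal{F}$ is compact modulo scaling, the orbit point $a\lambda$ for such a unit is highly skewed in $V$, with at least one coordinate of size $\gtrsim M^{n_v}\|a\|^{1/d}$. The constraint $\|b-x\|\leq X^2/\|x\|$ with $x\in\mathcal{F}$ then forces $x$ to lie in a thin subregion of $\mathcal{F}$ whose relative density in each dyadic shell tends to $0$ as $M\to\infty$, so increasing $M$ beats the main term by an arbitrarily small factor $\varepsilon$ while leaving the log-type errors unchanged. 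The central obstacle is the sub-convex counting step giving the $\kappa$-gain, which must be established uniformly over all $a\in V^\times$ with $\|a\|\geq Xe^{-B}$ and over the shape of $x$ within $\mathcal{F}$; carrying out the optimization between the arithmetic and geometric estimates is the most delicate point of the argument.
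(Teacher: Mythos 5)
Your opening reduction (using $\|\lambda\|=1$ to rewrite the condition as $\|x\|\cdot\|a\lambda-x\|\leq X^2$) and the dyadic decomposition in $\|x\|$ are sensible, but the proposal has a genuine gap at exactly the two places where the argument in \cite{FS2018} relies on its deep inputs; the survey records that the proof is based on the Aramaki--Ikehara Tauberian theorem, the Baker--W\"ustholz inequality on linear forms in logarithms, and elementary counts of integer points in cylinders, and your sketch replaces both non-elementary ingredients with assertions. The most concrete error is your inner count of units: you claim the constraint $\|a\lambda-x\|\leq X^2/\|x\|$ cuts out ``a polytope of controlled diameter'' in the logarithmic embedding of $\O_K^\times$. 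It does not. The region $\{v\in V^{\times}:\ \|v-x\|\leq T\}$ is a neighbourhood of the union of the loci where some Archimedean coordinate of $v-x$ vanishes, and in logarithmic coordinates it is unbounded: a unit $\lambda$ can make one coordinate of $a\lambda-x$ extremely small while others are enormous, keeping the product of absolute values below $T$. Bounding the number of such $\lambda$ requires a Diophantine lower bound on $|\sigma_i(a\lambda)-\sigma_i(x)|$, i.e.\ on a linear form in logarithms of $\sigma_i(a)$, $\sigma_i(x)$ and the fundamental units --- this is where Baker--W\"ustholz enters, and it is the only plausible source of the iterated-logarithm term $\theta_3\logp\logp\logp\log\|a\|$. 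Your attribution of that term to ``edge contributions'' of a dyadic sum cannot be correct: no dyadic bookkeeping produces a fourfold logarithm. The same unboundedness undermines your treatment of part (2), whose ``thin subregion'' argument presupposes precisely the compactness that is in question for large $\|\lambda\|_\infty$.

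The second gap is that the main term is asserted rather than proved. The passage ``a sub-convex lattice-point estimate \dots combined with a divisor-type bound \dots yields two candidate estimates whose optimization produces the exponents $1/(2N(N-1))$ and $1/(4N-1)$'' reverse-engineers the stated answer without supplying either estimate or explaining why the optimization lands on those exponents. This power saving in $X/\|a\|$ is the heart of the proposition --- it is what substitutes for the convexity of the norm in the application to Theorem \ref{theorem-shape} --- and in \cite{FS2018} it is extracted via a Tauberian argument with quantitative error terms rather than a sub-convexity bound for the quadric $u+v=b$. As written, the proposal contains no proof of the $\theta_1$ term, no mechanism for the $\theta_3$ term, and a false geometric claim about the unit count; it would need to be rebuilt around a linear-forms-in-logarithms input before the rest of the structure could be assessed.
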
 
The proof of the proposition is based on Aramaki-Ikehara Tauberian theorem \cite{Aramaki}, Baker-W\"ustholz's inequality on linear forms in logarithms \cite[Theorem 7.1]{BW} and on some elementary estimates on the number of integer points in cylinders. 

As a consequence we obtain the following result which may be of independent interest. 
\begin{theorem} 
Let $K$ be a number field of degree $N$ with $d$ Archimedean places. Let $B\in \R$ and let $\kappa=\frac{1}{3}$ if $N=1$ and $\kappa =\min\left\{\frac{1}{2N(N-1)},\frac{1}{4N-1}\right\}$ otherwise. There exist constants $C_1,C_2,C_3,C_4$ dependent only on $K$ and $B$ such that for every $X>0$ and $a\in\O_K$ such that $\|a\|\geq Xe^{-B}$ we have
\begin{align*} 
&|\{x\in\O_K| \ \ \|x(a-x)\|\leq X^2\}|\leq \\ &C_1X^{1+\kappa}\|a\|^{-\kappa}+C_2(\log X)^{2d-2}+C_3\logp\logp\logp\log\|a\|+C_4.
\end{align*}
\end{theorem}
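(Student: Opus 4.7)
The plan is to deduce this counting bound directly from part (1) of the preceding Proposition by folding the count along the action of $\O_K^{\times}$ on $V^{\times}$. The starting observation is that multiplicativity of $\|\cdot\|$ forces $\|z\|\cdot\|a-z\|=\|z(a-z)\|\leq X^2$ whenever $z$ satisfies the constraint, so at least one of $\|z\|$ and $\|a-z\|$ must be at most $X$. I would therefore split the set to be counted as $T_1\cup T_2$, where
$T_1=\{z\in\O_K:\|z\|\leq X,\ \|z(a-z)\|\leq X^2\}$ and
$T_2=\{z\in\O_K:\|a-z\|\leq X,\ \|z(a-z)\|\leq X^2\}$,
and note that the affine involution $z\mapsto a-z$ of $\O_K$ interchanges $T_1$ with $T_2$, so it suffices to bound $|T_1|$.

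For that bound I would fix a good fundamental domain $\mathcal{F}$ once and for all. Because $\mathcal{F}\subset V^{\times}$ and $\partial\mathcal{F}$ contains no nonzero points of $\O_K$, every nonzero $z\in\O_K$ admits a unique factorization $z=\lambda^{-1}y$ with $y\in\mathcal{F}\cap\O_K$ and $\lambda\in\O_K^{\times}$. Since units have norm one, this parametrization preserves both constraints defining $T_1$: $\|y\|=\|z\|\leq X$ and
\[
\|y(a-y\lambda^{-1})\|=\|\lambda^{-1}y(a-\lambda^{-1}y)\|=\|z(a-z)\|\leq X^2.
\]
Hence $z\mapsto (y,\lambda)$ is an injection $T_1\setminus\{0\}\hookrightarrow S(a,X)$, yielding $|T_1|\leq|S(a,X)|+1$.

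Combining the two steps gives $|\{z\in\O_K:\|z(a-z)\|\leq X^2\}|\leq 2|S(a,X)|+2$, and plugging in part (1) of the Proposition — with $C_i=2\theta_i$ for $i=1,2,3$ and $C_4=2\theta_4+2$ (to absorb the $z=0$ and $z=a$ contributions) — gives the stated bound, with the same main term, the same exponent $\kappa$, and the same error orders. I do not expect any substantive obstacle at this stage: the deep analytic input (Aramaki--Ikehara, Baker--W\"ustholz, and the cylinder lattice-point count) is already packaged inside the Proposition, and the only point requiring genuine care is verifying that the fundamental-domain correspondence is truly bijective on $\O_K\setminus\{0\}$, which is precisely the content of the \emph{good} hypothesis on $\mathcal{F}$.
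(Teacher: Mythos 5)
Your derivation is correct and follows essentially the route the paper intends: the theorem is stated as a direct consequence of the preceding Proposition, and your argument --- observing that $\min(\|z\|,\|a-z\|)\le X$ by multiplicativity of the norm, using the involution $z\mapsto a-z$ to reduce to the case $\|z\|\le X$, and then unfolding that count over the $\O_K^{\times}$-action via the good fundamental domain to land in $S(a,X)$ --- is precisely how part (1) of the Proposition is designed to be applied. The bookkeeping ($C_i=2\theta_i$ for $i=1,2,3$ and $C_4=2\theta_4+2$ to absorb $z=0$ and $z=a$) is also correct.
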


\subsection{Limit measures}
\label{subsection-limitmeasures}
For the sake of contradiction, we assumed that there exists a sequence of $n_i$-optimal sets $(S_{n_i})$ in $\O_K$ where $n_i$ tends to infinity. Theorem \ref{theorem-shape} shows that, up to translation and suitable rescaling, all of the sets $S_{n_i}$ can be enclosed in one compact set. Using that fact we construct a tight family of measures associated to sets $S_{n_i}$ and consider their weak-$*$ limits. In this section we study properties of such limits. 

By Corollary \ref{corollary-cylinder}, there exist sequences $(s_{n_i})$, $(t_{n_i})\subseteq V$ with $\|s_{n_i}\|=n_i|\Delta_K|^{1/2}$ and a compact set $\Omega$ such that $s_{n_i}^{-1}(S_{n_i}-t_{n_i})\subseteq \Omega$. Define the measures
\begin{equation*} 
\mu _{n_i}:=\frac{1}{n_i}\sum _{x\in S_{n_i}}\delta_{s_{n_i}^{-1}(x-t_{n_i})}. 
\end{equation*} 
Since $\Omega$ is compact we can consider, passing to a subsequence if necessary, a weak-* limit of $\mu _{n_i}$. Existence of such limits crucially uses the fact that $\Omega$ is compact. In this section we study the properties of such weak-* limits. 
\begin{definition} 
A probability measure $\mu$ on $V$ is a \textbf{limit measure} if it is a weak-* limit measure of the measures $\mu_{n_i}$ defined above. 
\end{definition} 
A limit measure is a probability measure supported on $\Omega$, absolutely continuous with respect to the Lebesgue measure and of density at most one (see\cite[Lemma 5.2]{FS2018}). 
The measure $\mu$ encodes some information about the large scale geometry of the sets $S_{n_i}$. The idea for the rest of the proof is to use the properties of $n$-optimal sets to show that such limit measures cannot exist.    

By analogy with the finite subsets of $\O_K$, one can define the energy of compactly supported, probability measures on $V$, absolutely continuos with respect to the Lebesgue measure and of bounded density. 
\begin{definition} 
Let $\nu$ be a compactly supported measure on $V$, absolutely continuous with respect the Lebesgue measure and of bounded density. The energy of $\nu $ is given by
\begin{equation*} 
I(\nu)=\int_{V}\int_{V}\log\|x-y\|d\nu(x)d\nu(y).
\end{equation*} 
\end{definition} 
The estimate on the energy of $n$-optimal sets (Corollary \ref{corollary-energyforsets}) leads to a formula for the energy of limit measures.
\begin{proposition}{\cite[Proposition 5.3]{FS2018}}
\label{proposition-energylimitmeasure}
Let $\mu$ be a limit measure. Then,
\begin{equation*} 
I(\mu)=-\frac{1}{2}\log |\Delta _K|-\frac{3}{2}-\gamma_K+\gamma_{\Q}. 
\end{equation*}
\end{proposition}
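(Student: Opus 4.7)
The plan is to compute the off-diagonal discrete log-energy of the rescaled configurations $\mu_{n_i}$ using Corollary \ref{corollary-energyforsets}, and then to match this limit with the continuous energy $I(\mu)$ via a truncation-of-kernel argument.

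Writing $y_k := s_{n_i}^{-1}(x_k - t_{n_i})$ for the points of $S_{n_i}$, the multiplicativity of $\|\cdot\|$ on $V$ together with $\|s_{n_i}\| = n_i|\Delta_K|^{1/2}$ gives $\|y_j - y_k\| = (n_i|\Delta_K|^{1/2})^{-1}\|x_j - x_k\|$ for $j \ne k$. Summing $\log$ over the $n_i(n_i+1)$ ordered pairs with $j \neq k$ and invoking Corollary~\ref{corollary-energyforsets} to replace $\sum_{j \neq k}\log\|x_j-x_k\|$ by its asymptotic value, the off-diagonal discrete energy
\[ \tilde I(\mu_{n_i}) := \frac{1}{n_i^2}\sum_{j \neq k}\log\|y_j - y_k\| \]
satisfies $\tilde I(\mu_{n_i}) = L + o(1)$, where $L := -\tfrac{1}{2}\log|\Delta_K| - \tfrac{3}{2} - \gamma_K + \gamma_{\Q}$ is the target value. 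It therefore suffices to show $I(\mu) = \lim_i \tilde I(\mu_{n_i})$.

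For this I would introduce the truncated kernel $K_M(x,y) := \max(\log\|x-y\|, -M)$, which is continuous and bounded on $V \times V$. Weak-$*$ convergence of $\mu_{n_i}^{\otimes 2}$ to $\mu^{\otimes 2}$ gives $\int K_M\, d\mu_{n_i}^{\otimes 2} \to \int K_M\, d\mu^{\otimes 2}$, and monotone convergence $K_M \downarrow \log\|x-y\|$ combined with the finiteness of $I(\mu)$ (which follows from $\mu$ having density at most $1$ and compact support) gives $\int K_M\, d\mu^{\otimes 2} \to I(\mu)$ as $M \to \infty$. Expanding the discrete integrand and separating the diagonal (contributing $\tfrac{(n_i+1)(-M)}{n_i^2} = o(1)$ for fixed $M$) from the near-diagonal pairs, one obtains
\[ \int K_M\, d\mu_{n_i}^{\otimes 2} = \tilde I(\mu_{n_i}) + E_{M,i} + o(1), \quad E_{M,i} := \frac{1}{n_i^2}\sum_{\substack{j\neq k\\ \|y_j-y_k\|<e^{-M}}}\bigl(-M - \log\|y_j-y_k\|\bigr) \ge 0. \]
Passing $i \to \infty$ first and then $M \to \infty$ immediately yields the lower bound $I(\mu) \ge L$.

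The crux is to show that $E_{M,i} \to 0$ as $M \to \infty$, uniformly in $i$. The differences $y_j - y_k$ lie in the lattice $s_{n_i}^{-1}\O_K \subset V$ of covolume $1/n_i$, while the $y_k$ all sit in the fixed compact set $\Omega$ of Corollary~\ref{corollary-cylinder}, so $y_j - y_k \in \Omega - \Omega$. Combining the lattice density with the classical estimate $\Leb\{v \in \Omega - \Omega : \|v\| \leq \epsilon\} \ll \epsilon(\log 1/\epsilon)^{d-1}$ for small product-norm balls in a bounded region, one obtains
\[ \bigl|\{(j,k) : j \neq k,\ \|y_j - y_k\| < \epsilon\}\bigr| \ll n_i^2\,\epsilon\,(\log 1/\epsilon)^{d-1}. \]
A dyadic decomposition of the shells $e^{-M-k-1} \leq \|y_j - y_k\| < e^{-M-k}$ then yields $E_{M,i} \ll e^{-M}M^{d-1}$ independently of $i$, and combining with the previous step gives $I(\mu) \le L$. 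The main obstacle is precisely this uniform near-diagonal control: because $\|\cdot\|$ is multiplicative rather than Euclidean, the sublevel sets $\{\|v\| < \epsilon\}$ are non-convex with logarithmic cusps along the coordinate hyperplanes, so one cannot merely appeal to Euclidean ball volumes but must extract the correct shape of the product-norm sublevel sets and couple it with the covolume of $s_{n_i}^{-1}\O_K$.
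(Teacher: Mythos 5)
Your overall route---extracting the exact asymptotic $\tilde I(\mu_{n_i}) = L + o(1)$ from Corollary \ref{corollary-energyforsets} after rescaling by $\|s_{n_i}\| = n_i|\Delta_K|^{1/2}$, and then transferring it to $I(\mu)$ through the truncated kernel $K_M$---is the strategy the paper itself indicates (it presents the proposition as a consequence of Corollary \ref{corollary-energyforsets} applied to the rescaled configurations), and your arithmetic is correct: the $n_i(n_i+1)$ off-diagonal terms each pick up $-\log(n_i|\Delta_K|^{1/2})$, which cancels the $n_i^2\log n_i$ and produces exactly $L$. The lower bound $I(\mu)\ge L$ via weak-$*$ convergence of $\mu_{n_i}^{\otimes 2}$, continuity and boundedness of $K_M$ on $\Omega\times\Omega$, and monotone convergence is complete.

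The gap is in the step you yourself call the crux. The claimed bound $|\{(j,k): j\neq k,\ \|y_j-y_k\|<\epsilon\}| \ll n_i^2\,\epsilon(\log 1/\epsilon)^{d-1}$, uniform in $i$, does not follow from ``lattice density times $\Leb\{v\in\Omega-\Omega : \|v\|\le\epsilon\}$''. For a region with hyperbolic cusps along the coordinate hyperplanes, the number of lattice points is not covolume$^{-1}$ times volume: a slab $\{|v_1|<\delta\}\cap(\Omega-\Omega)$ with $\delta$ below the lattice spacing has arbitrarily small volume yet can contain on the order of $n_i^{(d-1)/d}$ points of a rescaled lattice, and the sublevel set $\{\|v\|<\epsilon\}$ contains such slabs. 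Unwinding the rescaling, what you actually need is: uniformly over the (possibly very skewed) shapes of the cylinders $s_{n_i}(\Omega-\Omega)$ of volume $\asymp n_i$, the number of $x\in\O_K$ in such a cylinder with $\|x\|<\epsilon n_i|\Delta_K|^{1/2}$ is $\ll \epsilon n_i(\log 1/\epsilon)^{d-1}$. This is an arithmetic counting problem governed by the unit group (elements of small norm in a long thin cylinder are precisely where the failure of convexity of $\|\cdot\|$ bites), of the same nature as \cite[Proposition 2.5]{FS2018} quoted in Section \ref{section-numberfields}, whose proof requires the Aramaki--Ikehara Tauberian theorem and Baker--W\"ustholz's bounds on linear forms in logarithms. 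So your argument correctly reduces the proposition to this uniform near-diagonal count, but the count is asserted rather than proved, and the volume heuristic offered in its support is not valid as stated; this is where the real work of the proof lives.
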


Using the estimate on the energy of $n$-optimal sets (Corollary \ref{corollary-energyforsets}) and the fact that $n$-optimal sets are energy minimizing, one can give a lower bound on the energy of a compactly supported probability measure on $V$ with density at most $1$. 
\begin{lemma}{\cite[Lemma 5.4]{FS2018}} 
\label{lemma-energylowerbound}
Let $\nu$ be a compactly supported probability measure on $V$ with density at most $1$. Then
\begin{equation*} 
I(\nu)\geq -\frac{1}{2}\log |\Delta_K|-\frac{3}{2}-\gamma_K+\gamma_{\Q}.
\end{equation*} 

\end{lemma}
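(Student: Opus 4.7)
The plan is to approximate $\nu$ by empirical measures coming from cleverly chosen $(n+1)$-element subsets $T_n\subseteq\O_K$ and then to invoke the unconditional lower bound on the energy of arbitrary $(n+1)$-element subsets provided by Corollary \ref{corollary-energyforsets}. The bound will first be established for the empirical measures, and then transferred to $\nu$ by a truncation argument that handles the logarithmic singularity.

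Fix a compact neighborhood $\Omega\subseteq V$ of $\mathrm{supp}(\nu)$ and choose the diagonal scaling $s_n=(c_n,\ldots,c_n)\in V$ with $c_n=(n|\Delta_K|^{1/2})^{1/N}$, so that $\|s_n\|=n|\Delta_K|^{1/2}$. In the coordinates $y=s_n^{-1}x$, the lattice $\O_K$ becomes a discrete set of density $n$ per unit Lebesgue volume. Partition $\Omega$ into small boxes $B_1,\ldots,B_M$ of Lebesgue volumes $V_k$; each rescaled box $s_n B_k$ contains $nV_k+o(n)$ lattice points. Because $\nu$ has density at most one we have $\nu(B_k)\leq V_k$, so we may select approximately $(n+1)\nu(B_k)$ distinct lattice points from $s_n B_k$ and, after $O(M)$ rounding adjustments, assemble a set $T_n\subseteq\O_K$ with $|T_n|=n+1$. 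Refining the partition as $n\to\infty$ forces the empirical measures
\[
\mu_n:=\frac{1}{n+1}\sum_{x\in T_n}\delta_{s_n^{-1}x}
\]
to converge weakly-$*$ to $\nu$.

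Using $\|s_n^{-1}(x_i-x_j)\|=\|x_i-x_j\|/\|s_n\|$ and $N_{K/\Q}(E(T_n))=\prod_{i\neq j}\|x_i-x_j\|$, a direct calculation yields
\[
I(\mu_n)=\frac{\log N_{K/\Q}(E(T_n))}{(n+1)^2}-\frac{n}{n+1}\Bigl(\log n+\tfrac{1}{2}\log|\Delta_K|\Bigr),
\]
and the general lower bound of Corollary \ref{corollary-energyforsets} gives $\liminf_{n\to\infty}I(\mu_n)\geq -\tfrac{1}{2}\log|\Delta_K|-\tfrac{3}{2}-\gamma_K+\gamma_\Q$. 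To turn this into a bound on $I(\nu)$, truncate the singularity: the function $k_T(x,y):=\max(\log\|x-y\|,-T)$ is bounded and continuous on $V\times V$, so weak-$*$ convergence $\mu_n\to\nu$ gives $\iint k_T(x,y)\,d\mu_n(x)d\mu_n(y)\to\iint k_T(x,y)\,d\nu(x)d\nu(y)$, while the diagonal mass of $\mu_n\otimes\mu_n$ contributes only $-T/(n+1)$. Since $\log\|x-y\|\leq k_T(x,y)$ pointwise off the diagonal, this yields $I(\mu_n)\leq \iint k_T\,d\mu_n\otimes d\mu_n+T/(n+1)$; letting first $n\to\infty$ and then $T\to\infty$ (using monotone convergence together with the fact that $\log\|x-y\|\in L^1(\nu\otimes\nu)$ because $\nu$ has bounded density and compact support) gives $\liminf I(\mu_n)\leq I(\nu)$, which combined with the previous displayed bound finishes the proof.

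I expect the main technical hurdle to be the discretization step when $\nu$ saturates density one on a set of positive measure: the required count $(n+1)\nu(B_k)$ can narrowly exceed the number of lattice points available in $s_n B_k$ once boundary fluctuations of the lattice are accounted for, so the box-by-box rounding cannot be arranged directly. A clean workaround is first to establish the lemma for probability measures with density strictly bounded by some $\rho<1$, and then pass to the limit $\rho\to 1$ via convolutions $\nu\ast\varphi_\varepsilon$ with a smooth kernel, using the continuity of $I$ under such smoothing.
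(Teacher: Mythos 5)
Your argument follows essentially the same route as the paper's proof of Lemma \ref{lemma-energylowerbound}: approximate $\nu$ by normalized counting measures of rescaled $(n+1)$-point subsets of $\O_K$, invoke the unconditional lower bound of Corollary \ref{corollary-energyforsets} on $\log N_{K/\Q}(E(T_n))$, and transfer the bound to $I(\nu)$ by truncating the logarithmic kernel, and the details check out. One small correction to your closing remark: convolving with $\varphi_\varepsilon$ does not force the density strictly below $1$ (for $\nu=\Leb|_U$ the convolved density still equals $1$ on the $\varepsilon$-interior of $U$), so the cleaner reduction is to dilate, replacing $\nu$ by its pushforward under $x\mapsto(1+\delta)x$, which has density $(1+\delta)^{-N}$ and changes $I(\nu)$ only by the additive constant $N\log(1+\delta)\to 0$.
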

The idea of the proof of this estimate is to construct a sequence of sets $E_n$, $|E_n|=n+1$ such that the rescaled normalized counting measures $\frac{1}{n}\sum_{x\in E_n}\delta_{n^{-1/N}|\Delta_K|^{-1/2N}x}$ weakly-* converges to $\nu$. Then, one can relate the asymptotic growth of the energies of $E_n$ with the energy of $\nu$ and use Proposition \ref{proposition-energy}. 

Lemma \ref{lemma-energylowerbound} together with Proposition \ref{proposition-energylimitmeasure} implies that limit measures minimize the energy among all compactly supported probability measures on $V$ of density at most 1. This gives a strong constrains on the structure of limit measures.
\begin{proposition}{\cite[Proposition 5.5]{FS2018}}
\label{proposition-5.5}
Let $\nu$ be a compactly supported probability measure on $V$ of density at most 1 which is realizing the minimal energy among all such measures. Then, there exists an open set $U$ and $v\in V$ such that 
\begin{enumerate} 
\item $\nu=\Leb|_U$,
\item $\lambda(\overline{U}-v)\subseteq U-v$ for every $0\leq \lambda <1$,
\item $(\partial U-v)\cap V^{\times}$ is a codimension 1 submanifold of $V^{\times}$ of class $C^1$.
\end{enumerate} 
\end{proposition}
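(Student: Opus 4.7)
The strategy is classical Frostman potential theory adapted to the algebraic norm $\|\cdot\|$. Write $f$ for the density of $\nu$ with respect to Lebesgue measure and $U^\nu(x) := \int_V \log\|x-y\|\,d\nu(y)$ for the potential. The first step is the equilibrium condition: by testing mass-transport first variations $\nu + \epsilon(\mathbf{1}_{B_2} - \mathbf{1}_{B_1})\Leb$ with $B_1 \subseteq \{f > 0\}$ and $B_2 \subseteq \{f < 1\}$, minimality yields a constant $c \in \R$ with $U^\nu \leq c$ almost everywhere on $\{f > 0\}$ and $U^\nu \geq c$ almost everywhere on $\{f < 1\}$. In particular $U^\nu = c$ on the ``free region'' $\{0 < f < 1\}$.

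Next I would show that the free region has zero measure, forcing $\nu = \Leb|_U$ for a measurable set $U$ of unit Lebesgue volume. If the free region had positive measure, one could find two disjoint subsets $A, B$ of equal positive measure contained in a single complex-coordinate slice of $V$; the second variation of $I$ along $\mu = (\mathbf{1}_A - \mathbf{1}_B)\Leb$ equals $\epsilon^2 I(\mu)$, and restricted to that slice the kernel $\log\|\cdot\|$ differs from the classical two-dimensional logarithmic kernel by an additive constant. Since the latter is negative definite on compactly supported signed measures of total mass zero, we obtain $I(\mu) < 0$, contradicting minimality. The Frostman inequalities now give $\{U^\nu < c\} \subseteq U \subseteq \{U^\nu \leq c\}$ up to null sets.

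For the geometric part, note that $U^\nu$ is continuous on $V$ for bounded-density $\nu$. Choose $v$ to be a point at which $U^\nu$ attains its minimum on $\overline U$. Strict subharmonicity of $U^\nu$ in any complex slice at points where $f = 1$ forces $U^\nu(v) < c$ and puts $v$ in the interior of $U$. The star-shapedness $\lambda(\overline U - v) \subseteq U - v$ for $\lambda \in [0,1)$ is equivalent to strict monotonicity of $U^\nu$ along every open ray from $v$ up to the level $\{U^\nu = c\}$. I would prove this monotonicity by a swap argument: if it failed, one would locate a small ball in $U$ where $U^\nu$ exceeds the potential on a comparable ball outside $U$, and swapping these strictly decreases $I$, contradicting the Frostman inequalities. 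With this monotonicity in hand, and using regularity of logarithmic potentials against bounded densities away from the coordinate hyperplanes, the level set $\{U^\nu = c\} \cap V^\times$ has non-vanishing radial gradient, so the implicit function theorem promotes it to a $C^1$ codimension-one submanifold as claimed.

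The main obstacle is the radial monotonicity step. Because $\|\cdot\|$ fails to be convex outside the imaginary quadratic case --- precisely the failure that necessitated the potential-theoretic approach of Section \ref{section-numberfields} --- one cannot upgrade the coordinate-wise concavity (real coordinates) and subharmonicity (complex coordinates) of $\log\|x-y\|$ to global radial properties through a direct geometric argument. Turning these local coordinate-wise potential-theoretic statements into a single global monotonicity along rays from the minimizer $v$, with enough strictness to drive the swap perturbation, is the technical heart of the proposition.
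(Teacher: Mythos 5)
Your variational (Frostman) route to part (1) is genuinely different from the paper's, and it can probably be pushed through, but two of its details are wrong as written. A single complex-coordinate slice of $V$ has Lebesgue measure zero, so a perturbation $(\mathbf{1}_A-\mathbf{1}_B)\Leb$ with $A,B$ contained in one slice is the zero measure; and the kernel restricted to such a slice is not ``the planar logarithmic kernel plus a constant'' but identically $-\infty$, since $\log|x_i-y_i|=-\infty$ in every frozen coordinate. What actually saves the second-variation argument is the additive structure $\log\|x-y\|=\sum_i c_i\log|x_i-y_i|$: taking $A=A_0\times C$, $B=B_0\times C$ with $\Leb(A_0)=\Leb(B_0)$ annihilates every marginal of $\mu=(\mathbf{1}_A-\mathbf{1}_B)\Leb$ except the distinguished one, whence $I(\mu)=c_j\,\Leb(C)^2\,I\bigl((\mathbf{1}_{A_0}-\mathbf{1}_{B_0})\Leb\bigr)<0$ by conditional negative definiteness of the one- or two-dimensional logarithmic kernel. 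Extracting such a product configuration from a positive-measure set $\{0<f<1\}$ needs a Fubini/density-point argument that you should supply.

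The genuine gap is in parts (2) and (3), which are the substance of the proposition, and you have in effect conceded it: the strict radial monotonicity of $U^\nu$ along rays from $v$ is reduced to an unspecified ``swap argument'' and then flagged as the unproved technical heart. I do not see how to close it along your lines: since $U^\nu(x)=\sum_i c_i p_i(x_i)$ is a sum of potentials of the marginals, the equilibrium conditions give no control of $U^\nu$ along a generic ray, and your chosen $v$ (a minimizer of $U^\nu$ on $\overline{U}$) has no evident relation to a center of star-shapedness. The paper's proof rests on a mechanism you never invoke: a continuous collapsing (symmetrization) of measures, the analogue of the discrete collapsing of Section \ref{section-quadratic imaginary}, performed only along the hyperplanes adapted to the coordinate factorization of $V$ (those parallel to a hyperplane contained in $V\setminus V^{\times}$), which is exactly where the coordinate-wise polarization inequality for $\log|x_i-y_i|$ applies. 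Collapsing strictly decreases the energy unless the measure is already collapsed, so a minimizer is collapsed in every admissible direction; after translating the collapsing centers to $v$, each one-coordinate slice of $U$ is a centered interval (real place) or disc (complex place), and rescaling one coordinate at a time yields $\lambda(\overline{U}-v)\subseteq U-v$ for $0\leq\lambda<1$. The $C^1$ assertion is then derived from the equilibrium identity for the collapsed set; note that at real places the derivative of the potential of a bounded density is a Hilbert transform and need not be continuous, which is one reason the conclusion is restricted to $V^{\times}$ --- your blanket appeal to ``regularity of logarithmic potentials against bounded densities'' glosses over exactly this point.
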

The proof uses a procedure of collapsing measures which is a continuous version of the discrete collapsing. As opposed to the quadratic imaginary case, here one can collapse the measure only along the hyperplanes parallel to a hyperplane contained in $V\setminus V^\times.$

Since any limit measure $\mu$ minimizes the energy among all compactly supported probability measures on $V$ of density at most $1$, $\mu$ satisfies the conclusion of Proposition \ref{proposition-5.5}. In particular, the set $S_{n_i}$ is equal to $(s_{n_i}U+t_{n_i})\cap \O_K$, modulo $o(n)$ points. In the remainder of the argument, the sets $(s_{n_i}U+t_{n_i})$ will play the role of the rectangles or the hexagons from the proof in the quadratic imaginary case. 

\subsection{Discrepancy} 
In the last step of the proof one needs to find a prime power $\frak p^l\subset \O_K$ modulo which the set $S_{n_i}$ fails to be almost uniformly equidistributed. In the quadratic imaginary case, it was possible to describe the shape of $S_{n_i}$ quite explicitly and find the prime power by hand. This is no longer possible, since one only knows that the shape is given by the set $U$ from Proposition \ref{proposition-5.5}. The existence of a prime power with the desired properties can be shown using the discrepancy of the set $U$.
\begin{definition} 
Let $W$ be a bounded measurable subset of $V$. For $x\in V^\times$, $v\in V$ define $N_x(W,v):=|(xU+v)\cap\O_K|$. We define the discrepancy as
\begin{equation*} 
D_x(W,v):=N_x(W,v)-|\Delta _K|^{-\frac{1}{2}}\Leb(W)\|x\|
\end{equation*} 
and the maximal discrepancy 
\begin{equation*} 
D_x(W):=\esssup _{v\in V}|D_x(W,v)|. 
\end{equation*}
\end{definition}
Using the fact that $n$-optimal sets are almost uniformly equidistributed modulo every power of every prime ideal in $\O_K$ together with a version of the prime number theorem one can give a uniform upper bound on $D_x(U)$ for all $x\in V^\times$. 
\begin{lemma}{\cite[Lemma 6.3]{FS2018}}
\label{lemma-upperboundondiscrepancy} 
Let $\mu$ be a limit measure on $V$. Let $U$ be a non-empty open bounded subset of $V$ such that $\mu=\Leb|_U$ and $\partial U$ is Jordan measurable of Jordan measure $0$. Then $D_x(U)<1$ for all $x\in V^{\times}$. 
\end{lemma}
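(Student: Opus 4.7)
The plan is to argue by contradiction: suppose $D_{x_0}(U)\geq 1$ for some $x_0\in V^{\times}$ and derive a violation of the almost uniform equidistribution of some $S_{n_i}$ modulo a prime power.

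First I extract arithmetic structure from the assumption. By Fubini the $\O_K$-periodic function $v\mapsto N_{x_0}(U,v)$ has mean $c:=|\Delta_K|^{-1/2}\Leb(U)\|x_0\|$ over a fundamental domain, and it is integer valued. Since $\esssup_v|N_{x_0}(U,v)-c|\geq 1$ and the mean of $D_{x_0}(U,\cdot)$ is zero, the function $N_{x_0}(U,\cdot)$ must take at least two distinct values differing by at least $2$: if it were constant, the mean would equal that value and the discrepancy would vanish; if it took only two consecutive integer values $k,k+1$, the mean constraint would force $c\in(k,k+1)$ and hence $\esssup<1$. The hypothesis that $\partial U$ is Jordan null forces $\partial(x_0 U)$ to be Lebesgue null, so that level sets of $N_{x_0}(U,\cdot)$ coincide with open sets modulo null sets. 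Consequently there exist nonempty open sets $B_1,B_2\subset V$ and integers $n_1>n_2$ with $n_1-n_2\geq 2$ such that $N_{x_0}(U,v)=n_j$ for $v\in B_j$.

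The second ingredient is a lattice identity that transports this discrepancy down to the scale of the $n$-optimal sets. For any nonzero $\pi\in\O_K$, partitioning $\O_K$ into cosets modulo $\pi\O_K$ gives
\[
\bigl|(\pi x_0 U+\pi v)\cap(r+\pi\O_K)\bigr|=N_{x_0}(U,v-r/\pi)\qquad(r\in\O_K/\pi\O_K),
\]
so the residue distribution of $(\pi x_0 U+\pi v)\cap\O_K$ modulo $\pi\O_K$ samples $N_{x_0}(U,\cdot)$ at the $\|\pi\|$ points of the coset $v+\pi^{-1}\O_K/\O_K\subset V/\O_K$, a sample that becomes $\|\pi\|^{-1/N}$-dense as $\|\pi\|\to\infty$. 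Using the prime ideal theorem for $\O_K$ together with class-number bookkeeping (replacing a prime ideal $\mathfrak{q}$ by $\mathfrak{q}^{h_K}$ to ensure principality) and a Minkowski-type argument, I select $\pi_i\in\O_K$ such that $\pi_i\O_K=\mathfrak{p}_i^{l_i}$ is a prime power ideal, $\|\pi_i\|\to\infty$, and $\pi_i x_0$ approximates $s_{n_i}$ to within $o(\|s_{n_i}\|^{1-1/N})$ in each archimedean coordinate.

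For $i$ large the density of the division points yields $v^{(i)}\in V$ and residues $r_1^{(i)},r_2^{(i)}$ with $v^{(i)}-r_j^{(i)}/\pi_i\in B_j$ for $j=1,2$. The identity then forces the residue counts of $(\pi_i x_0 U+\pi_i v^{(i)})\cap\O_K$ modulo $\pi_i\O_K$ in classes $r_1^{(i)},r_2^{(i)}$ to be $n_1$ and $n_2$, an integer gap of at least $2$. Because $\pi_i x_0$ is close to $s_{n_i}$, the symmetric difference between $\pi_i x_0 U+\pi_i v^{(i)}$ and a suitable translate of $s_{n_i}U$ is contained in a thin boundary layer carrying only $o(n_i)$ lattice points; combined with $|S_{n_i}\,\triangle\,((s_{n_i}U+t_{n_i})\cap\O_K)|=o(n_i)$, a Markov-type averaging over $v^{(i)}$ (or over small adjustments of $\pi_i$ within the allowed range) produces a choice for which the per-class error in the two selected residues is $o(1)$. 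The residue counts of $S_{n_i}$ modulo $\mathfrak{p}_i^{l_i}=\pi_i\O_K$ therefore differ by at least $2-o(1)>1$ for $i$ large, contradicting Lemma~\ref{lemma-equidistribution}.

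The main obstacle is the simultaneous quantitative construction of $\pi_i$: it must generate a prime-power ideal, lie close to $s_{n_i}/x_0$ in $V$, and be compatible with the per-class error control. The first two conditions call for an effective prime ideal theorem in $\O_K$ with both congruence and directional constraints---the general-number-field analog of the Dirichlet-type argument invoked in the imaginary quadratic case---and the third, which is where the strict inequality $<1$ rather than $\leq 1$ is delivered, relies on spreading the $o(n_i)$ global lattice-point discrepancy across the $\sim\|\pi_i\|$ residue classes of $\mathfrak{p}_i^{l_i}$.
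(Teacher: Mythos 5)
Your strategy matches the one the survey attributes to \cite{FS2018}: convert a discrepancy $\geq 1$ into two level sets of $v\mapsto N_{x_0}(U,v)$ whose values differ by at least $2$, transport this to residue counts modulo a principal prime power via the identity $|(\pi x_0U+\pi v)\cap(r+\pi\O_K)|=N_{x_0}(U,v-r/\pi)$, and contradict Lemma \ref{lemma-equidistribution}. The mean-value argument forcing a gap of $2$ in the essential range (constant $\Rightarrow$ zero discrepancy; two consecutive values $\Rightarrow$ $\esssup<1$), the counting identity, and the averaging step (a proportion $\asymp\Leb(B_j)$ of the $\|\pi_i\|$ residues sample each $B_j$, while the total error $|S_{n_i}\,\triangle\,((s_{n_i}U+t_{n_i})\cap\O_K)|=o(n_i)$ can spoil only $o(n_i)$ of them, so some sampled residue in each $B_j$ has zero error) are all sound and are essentially how the argument goes.

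The genuine gap is the one you flag yourself: the existence of $\pi_i$ generating a prime power ideal with $\pi_i x_0$ close to $s_{n_i}$ \emph{in every archimedean coordinate}. This cannot be extracted from the Landau prime ideal theorem plus class-number bookkeeping alone: a generator of $\mathfrak{q}^{h_K}$ is determined only up to $\O_K^{\times}$, and the unit lattice is discrete, so adjusting by units controls the archimedean direction of $\pi_i$ only up to a \emph{bounded}, not vanishing, multiplicative error; a bounded directional error replaces $x_0U$ by $ux_0U$ for some $u$ not close to $1$, which is a genuinely different set, and the identity you rely on then computes $N_{ux_0}(U,\cdot)$ rather than $N_{x_0}(U,\cdot)$. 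What fills this is an equidistribution theorem for prime elements with simultaneous control of the norm and of the image in $V^{\times}/\O_K^{\times}$ (Hecke's theorem on prime ideals with Gr\"ossencharacter conditions, in the form of Mitsui's prime element theorem); this is precisely the ``version of the prime number theorem'' the survey alludes to, and the proof is incomplete without invoking it. A smaller point: the tolerance $o(\|s_{n_i}\|^{1-1/N})$ per coordinate is not the right scale for $N>2$; each coordinate of $s_{n_i}$ has size $\asymp n_i^{1/N}$, so to keep the boundary layer between $\pi_ix_0U+\pi_iv$ and $s_{n_i}U+t_{n_i}$ down to $o(n_i)$ lattice points you need a coordinatewise \emph{relative} error $o(1)$, i.e.\ an absolute error $o(n_i^{1/N})$ per coordinate; the two exponents coincide only when $N=2$.
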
 
On the other hand, using the smoothness of the boundary of $U$, we have:
\begin{lemma}\cite[Lemma 6.4]{FS2018} 
\label{lemma-lowerboundondiscrepancy}
Assume $V=\R^{r_1}\times \C^{r_2}$ with $r_1+2r_2>1$. Let $W$ be an open bounded subset of $V$ such that $\partial W\cap V^{\times}$ is a submanifold of $V^\times $ of class $C^1$ and $\lambda \overline{U}\subseteq U$ for every $0\leq \lambda <1$. Then, there exists $x\in V^\times $ such that $D_x(U)>1$. 
\end{lemma}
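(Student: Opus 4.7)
The plan is to take $x = \lambda \cdot (1,\ldots,1) \in V^{\times}$ for a sufficiently large scalar $\lambda>0$ and show that $D_{x}(U)\to\infty$ as $\lambda\to\infty$; any large enough $\lambda$ then produces the required $x$. The engine is that the $(d-1)$-dimensional measure of $\lambda\partial U$ scales as $\lambda^{d-1}$ with $d=r_1+2r_2>1$, and this fuels a growing variance of the lattice-count function $v\mapsto N_x(U,v)$ on the torus $V/\O_K$. The passage from variance growth to $L^{\infty}$ growth, together with the elementary identity $D_x(U)\geq \|N_x-E\|_{L^{2}(V/\O_K)}/|\Delta_K|^{1/4}$, then closes the argument.

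Concretely, I would first apply Poisson summation on the lattice $\O_K\subset V$ to the indicator $\chi_{xU+v}$. Combining the translation identity with the dilation identity $\widehat{\chi_{xU}}(\xi)=\|x\|\widehat{\chi_U}(x\xi)$ yields the Fourier expansion
\[
N_x(U,v)-E(x)=\frac{\|x\|}{|\Delta_K|^{1/2}}\sum_{\xi\in(\O_K)^{\ast}\setminus\{0\}}\widehat{\chi_U}(x\xi)\,e^{-2\pi i\langle\xi,v\rangle},
\]
so that by Parseval on $V/\O_K$ (volume $|\Delta_K|^{1/2}$),
\[
\|N_x(U,\cdot)-E(x)\|_{L^{2}(V/\O_K)}^{2}=\frac{\|x\|^{2}}{|\Delta_K|^{1/2}}\sum_{\xi\in(\O_K)^{\ast}\setminus\{0\}}|\widehat{\chi_U}(x\xi)|^{2}.
\]
The $C^{1}$ hypothesis on $\partial U\cap V^{\times}$, together with the star-shapedness $\lambda\overline{U}\subseteq U$ for $\lambda<1$ (which forces $\partial U\cap V^{\times}$ to have positive $(d-1)$-dimensional measure), puts us in the setting of classical Kendall--Skriganov-type variance estimates, giving $\|N_{\lambda\cdot(1,\ldots,1)}-E\|_{L^{2}}^{2}\gtrsim \lambda^{d-1}$ (with a possible $\log$-factor if $d=2$). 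Since the torus has finite volume, this yields $D_{\lambda\cdot(1,\ldots,1)}(U)\gtrsim \lambda^{(d-1)/2}/|\Delta_K|^{1/4}$, which tends to infinity because $d>1$, and in particular exceeds $1$ for $\lambda$ large.

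The main obstacle is the $L^{2}$ lower bound on the variance. The $C^{1}$ assumption easily gives the decay $|\widehat{\chi_U}(\eta)|\leq C|\eta|^{-1}$ by a single integration by parts across the boundary, but the matching lower bound on the spectral sum requires ruling out cancellations among nearby Fourier modes. A more geometric alternative, which I would pursue if the Fourier route proves recalcitrant, is to work directly on the boundary side: for a $C^{1}$ hypersurface, the number of lattice points within an $O(1)$ tube of $\lambda\partial U$ is $\asymp \lambda^{d-1}$, and a standard crossing argument then shows each such point contributes $\Theta(1)$ to the $L^{2}$ oscillation of $v\mapsto N_x(U,v)$ on the torus. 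Either way, the dimensional condition $r_1+2r_2>1$ is exactly what drives the growth; in the excluded case $K=\Q$ a translate of $[0,n)\cap\Z$ realises $D_x(U)=0$, so no analogous lower bound can hold.
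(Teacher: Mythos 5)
Your reduction to a variance bound is sound as far as it goes: the Poisson--Parseval identity for $\|N_x(U,\cdot)-E(x)\|_{L^2(V/\O_K)}^2$ is correct (with the dual lattice being the inverse different rather than $\O_K$ itself, a harmless point), and the passage $\esssup_v|D_x(U,v)|\geq \|D_x(U,\cdot)\|_{L^2}/|\Delta_K|^{1/4}$ does show that an unbounded variance forces $D_x(U)>1$ for some $x$. The genuine gap is exactly the step you flag and then black-box: the lower bound $\sum_{\xi\neq 0}|\widehat{\chi_U}(x\xi)|^2\|x\|^2\gtrsim \lambda^{d-1}$ for $x=\lambda(1,\dots,1)$. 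The Kendall--Skriganov-type estimates you invoke are proved for \emph{convex} bodies (or bodies with curvature assumptions), and even there the variance lower bound pointwise in the dilation parameter is false in general: a body whose boundary contains a lattice-parallel flat piece (the aligned cube being the extreme case) has variance $o(\lambda^{d-1})$, indeed zero, along a subsequence of dilations. The set $U$ produced by Proposition \ref{proposition-5.5} is only star-shaped, not convex, and its boundary --- $C^1$ merely away from the coordinate hyperplanes --- could a priori contain flat pieces or be smooth enough that $\widehat{\chi_U}$ decays too fast along the particular rays $\{tx\xi\}$ appearing in the sum. So there is no classical theorem to cite here; proving that the $C^1$ plus star-shaped hypotheses force enough Fourier mass at frequency $\asymp\lambda$ is the actual content of the lemma, and it is missing from your write-up. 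The fallback "crossing argument" has the same hole: the $\asymp\lambda^{d-1}$ lattice points in an $O(1)$-tube around $\lambda\,\partial U$ each cross the boundary as $v$ varies, but $N_x(U,v)$ records only the \emph{net} count, and entries can cancel exits exactly (this is precisely what happens for an interval when $d=1$ and for the aligned cube when $d\geq 2$); converting individual crossings into $L^2$ oscillation of the net count is equivalent to the variance lower bound, not a standard supplement to it.

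Two further remarks. First, since the lemma only asks for a single $x$ with $D_x(U)>1$, you could weaken your target to $\int_R^{2R}\mathrm{Var}(\lambda)\,d\lambda\to\infty$, which by Plancherel in the radial variable reduces to lower bounds on $\int_T^{2T}|\widehat{A_\omega}(t)|^2dt$ for the slice-area functions $A_\omega(s)=\mathcal{H}^{d-1}(U\cap\{\langle \cdot,\omega\rangle=s\})$ in dual-lattice directions $\omega$; but one must still rule out that all these $A_\omega$ are so smooth that their Fourier transforms decay faster than $t^{-d}$, and that is again where the $C^1$ and star-shapedness hypotheses have to do real work. Second, note that you have only used scalar dilations $x=\lambda(1,\dots,1)$, whereas the statement offers the full $d$-parameter family $x\in V^\times$ (including very anisotropic multipliers with $\|x\|$ controlled); the proof in \cite{FS2018} exploits the local $C^1$ graph structure of $\partial U$ at a point of $V^\times$ directly, and the extra freedom in $x$ is part of what substitutes for the missing convexity. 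As it stands, your argument establishes the easy half of the lemma (the reduction to a spectral lower bound) but not the half that carries the difficulty.
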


\subsection{Proof of Theorem \ref{theorem-main}}
In this section we gather results described earlier to sketch the proof that large $n$-optimal sets do not exist. 
\begin{proof}[Proof of Theorem \ref{theorem-main}]
Let $V=K\otimes _{\Q} \R=\R^{r_1}\times \C^{r_2}$. For the sake of contradiction let us assume that there exists a sequence of $n_i$-optimal sets $S_{n_i}\subseteq \O_K$ with $n_i\to \infty$. By Corollary \ref{corollary-cylinder}, there exists a compact set $\Omega\subseteq V$ and sequences $(s_{n_i})_{i\in\N},(t_{n_i})_{i\in \N}\subseteq V$ with $\|s_{n_i}\|=n_i|\Delta_K|^{1/2}$ such that $s_{n_i}^{-1}(S_{n_i}-t_{n_i})\subseteq \Omega$. We define 
\begin{equation*} 
\mu _{n_i}=\frac{1}{n_i}\sum_{x\in S_{n_i}}\delta_{s_{n_i}^{-1}(x-t_{n_i})}.
\end{equation*}
Since $\Omega$ is compact, after passing to a subsequence if necessary, these measures converge weak-* to a probability measure $\mu$. This is a limit measure. By Section \ref{subsection-limitmeasures}, $\mu$ is compactly supported, absolutely continuous with respect to the Lebesgue measure of density at most $1$, so by Propositon \ref{proposition-energylimitmeasure} and Lemma \ref{lemma-energylowerbound} it minimizes the energy among all such measures. By Proposition \ref{proposition-5.5}, $\mu=\Leb|_U$ where $U$ is an open set with piecewise $C^1$ boundary. Finally using Lemma \ref{lemma-upperboundondiscrepancy} and Lemma \ref{lemma-lowerboundondiscrepancy} we get a contradiction, since the discrepancy would be at the same time bigger and smaller than $1$.
\end{proof}

\section{Open problems}
\label{section-openproblems}


\subsection{Function fields} Let $\mathbb F_q$ be a finite field with $q$ elements, $\mathbb F_q=\{a_0,\ldots, a_{q-1}\}$. Let $n\in\N$. Write $n=\sum_{i=0}^k d_i q^{i}$ with $d_i\in\{0,\ldots, q-1\}$. It was observed by Bhargava in \cite{Bh1} that the sequence $s_n:=\sum_{i=0}^k a_{d_i} t^i\in \mathbb F_q[t]$ is a simultaneous $p$-ordering in $\mathbb F_q[t]$.

More generally one can consider a protective curve $\mathbf{C}$ over $\mathbb F_q$, select a finite set $\Sigma \subset \mathbf{C}(\mathbb F_q)$ and consider the ring $\Gamma(\mathbf{C}\setminus \Sigma, \O)$ of the regular functions on $\mathbf{C}\setminus \Sigma$. The case of the ring $\mathbb F_q[t]$ is recovered by taking $\mathbf C=\mathbb P^1$ and $\Sigma$ consisting of the point at infinity. 
\begin{question} 
For which curves and sets $\Sigma$ does the ring $\Gamma(\mathbf{C}\setminus \Sigma, \O)$ admits a simultaneous $\mathfrak{p}$-orderings or arbitrarily large $n$-optimal sets?
\end{question} 
It seems that the answer might depend of the genus of the curve, with high genus unlikely to contain large $n$-optimal sets.

\subsection{Schinzel's problem}
The question on existence of simultaneous $\mathfrak{p}$-orderings seems a bit similar to an old problem called Schinzel's problem:
\begin{question}[Schinzel's problem \cite{Sch1969}]
\label{question-Schinzel}
Let $K\neq \Q$ be a number field and let $\O_K$ be its field of fractions. Does there exist a sequence $(a_i)_{i\in \N}\subseteq \O_K$ such that for every ideal $I$ in $\O_K$ with the norm $N(I)$, the sequence $a_0,\ldots, a_{N(I)-1}$ is a complete system of representatives of $\O_K/I$.  
\end{question} 
There have been partial results obtained: Wantula (1969) (unpublished) showed that $K$ cannot be a quadratic number field and Was\'en (1976) \cite{Wasen} showed that $\O_K$ has to be a principal ideal domain. 


\begin{definition} 
A sequence $(a_i)_{i\in\N}$ satisfying the condition from Question \ref{question-Schinzel} is called a Schinzel sequence. 
\end{definition}
Frisch \cite{FCC} showed that if $\O_K$ admits a Schinzel sequence, then it is Euclidean. 
\begin{proposition}[\cite{FCC}, p.102]
Let $K$ be a number field and let $\O_K$ be its ring of integers. Let $N(x)$ denote the norm of the principal ideal $(x)$ in $\O_K$. If $\O_K$ admits a Schinzel sequence, then $\O_K$ is Euclidean with the norm $N$.
\end{proposition}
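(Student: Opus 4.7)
The plan is to establish, after a harmless translation of the Schinzel sequence, the size estimate $N(a_i) \leq i$ for every $i \geq 1$, and then to read off the Euclidean property directly from the definition of a Schinzel sequence.

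First I would normalize so that $a_0 = 0$. This is permissible because translating a Schinzel sequence by any $c \in \O_K$ yields another Schinzel sequence: the map $x \mapsto x - c$ is a bijection on $\O_K/I$ for every ideal $I$, so it sends complete systems of representatives to complete systems of representatives. Once $a_0 = 0$, the sequence cannot repeat the value $0$: if $a_i = 0$ for some $i \geq 1$, pick any ideal $I \subseteq \O_K$ with $N(I) > i$ (for instance a suitable prime power), and observe that $a_i \equiv a_0 \pmod{I}$, contradicting the Schinzel requirement that $a_0, \dots, a_{N(I)-1}$ be distinct modulo $I$.

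The central step is the size estimate. Fix $i \geq 1$ and apply the Schinzel condition to the ideal $I = (a_i)$, which is nonzero by the previous paragraph. If $a_i$ is a unit then $N(I) = 1 \leq i$; otherwise $I$ is proper and the Schinzel condition asserts that $a_0, a_1, \dots, a_{N(I)-1}$ are pairwise distinct modulo $I$. But $a_i \equiv 0 = a_0 \pmod{I}$, so the index $i$ cannot belong to $\{0, 1, \dots, N(I)-1\}$, which forces $i \geq N(I) = N(a_i)$.

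With the size estimate established, the Euclidean property is immediate. Given $a \in \O_K$ and $b \in \O_K \setminus \{0\}$, set $n = N(b)$; the Schinzel property for the ideal $(b)$ provides a unique $j \in \{0, 1, \dots, n-1\}$ with $a \equiv a_j \pmod{(b)}$, hence $a = qb + a_j$ for some $q \in \O_K$. If $j = 0$ then the remainder $a_j$ vanishes; otherwise $N(a_j) \leq j < n = N(b)$, confirming that $N$ is a Euclidean function on $\O_K$. There is no significant obstacle here; the whole argument hinges on the single observation that feeding the principal ideal $(a_i)$ back into the Schinzel axiom automatically bounds the norm of $a_i$ by its index, which is precisely what the division algorithm demands.
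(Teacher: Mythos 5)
Your proof is correct and follows the same route as the paper's: normalize $a_0=0$, derive the bound $N(a_j)\leq j$, and then extract the division algorithm directly from the Schinzel property applied to the principal ideal generated by the divisor. The only difference is that you supply the justification for the key inequality $N(a_j)\leq j$ (by feeding $(a_j)$ itself into the Schinzel axiom), which the paper asserts without proof; this fills in the one detail the paper leaves implicit.
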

\begin{proof} 
Assume there exists a Schinzel sequence $(a_i)_{i\in \N}$ in $\O_K$. We can assume $a_0=0$. By the definition of Schinzel sequence, $N(a_j)\leq j$ for every $j\in\N\setminus\{0\}$. Let $a,b \in \O_K$ with $a\neq 0$ and $a$ not a unit. By the definition of a Schinzel seqqunece, there exists $m\in \{0,\ldots , N(a)-1\}$ such that $b=ca+a_m$. As observed before, $N(a_m)\leq m<N(a)$.    
\end{proof} 
Could the methods used to prove Theorem \ref{theorem-main} be used to address the Schinzel's problem? If one could construct limit measures attached to a Schinzel sequence, what would be their properties?

\end{document}